\documentclass[11pt]{amsart}
\usepackage[margin=1in]{geometry}
\numberwithin{equation}{section}
\newtheorem{thm}{Theorem}[section]
\newtheorem{prop}[thm]{Proposition}
\newtheorem{lem}[thm]{Lemma}

\newtheorem{defn}[thm]{Definition}

\newtheorem{clm}[thm]{Claim}

\newcommand{\ut}{u_\tau}
\newcommand{\pst}{\psi_\tau}
\newcommand{\rt}{\rho_\tau}
\newcommand{\RN}{\mathbb{R}^N}
\newcommand{\sk}{\psi_k }
\newcommand{\des}{\Delta e^\psi }
\newcommand{\es}{e^\psi }

\newcommand{\skx}{\psi_k (x)}
\newcommand{\esk}{e^{\psi_k }}

\newcommand{\esbj}{e^{\bar{\psi}_j}}

\newcommand{\uk}{u_k }

\newcommand{\uko}{u_{k-1} }

\newcommand{\utj}{\tilde{u}_j }

\newcommand{\ubj}{\bar{u}_j }
\newcommand{\sbj}{\bar{\psi}_j }

\newcommand{\omt}{\Omega_T}
\newcommand{\io}{\int_\Omega}

\newcommand{\po}{\partial\Omega }
\newcommand{\te}{\theta_\varepsilon }
\newcommand{\edu}{e^{-\plap}}
\newcommand{\plap}{\Delta_pu}
\newcommand{\plapt}{\Delta_p\ut}
\newcommand{\plapk}{\Delta_pu_k}
\newcommand{\plapj}{\Delta_p\ubj}

\begin{document}
	\title[A Crystal Surface Model]{Existence theorems for a  crystal surface model involving the p-Laplace operator}
	\author{Xiangsheng Xu}\thanks
	{Department of Mathematics and Statistics, Mississippi State
		University, Mississippi State, MS 39762.
		{\it Email}: xxu@math.msstate.edu. To appear in SIAM J. Math. Anal..}
 \keywords{Crystal surface	model, existence, exponential function of a p-Laplacian, nonlinear fourth order equations.
	} \subjclass{ 35D30, 35Q82, 35A01.}
	\begin{abstract} The manufacturing of crystal films lies at the heart of modern nanotechnology. How to accurately predict the motion of a crystal surface is of fundamental importance. Many continuum models have been developed for this purpose, including a number of PDE models, which are often obtained as the continuum limit of a family of kinetic Monte Carlo models of crystal surface relaxation that includes both the solid-on-solid and discrete Gaussian models. In this paper we offer an analytical perspective into some of these models. To be specific, we study the existence of a weak solution to the boundary value problem for the equation $ - \Delta e^{-\mbox{div}\left(|\nabla u|^{p-2}\nabla u\right)}+au=f$, where $p>1, a>0$ are given numbers and $f$ is a given function. This problem is derived from  a crystal surface model proposed by J.L.~Marzuola and J.~Weare (2013 Physical Review,  E 88, 032403). The mathematical challenge is due to the fact that the principal term in our equation is an exponential function of a p-Laplacian.  Existence of a suitably-defined weak solution is established under the assumptions that $p\in(1,2], \ N\leq 4$, and $f\in W^{1,p}$. Our investigations reveal that the key to our existence assertion is how to control the set where $-\mbox{div}\left(|\nabla u|^{p-2}\nabla u\right)$ is $\pm\infty$.
		
		\end{abstract}
	\maketitle

	\section{Introduction}

Let $\Omega$ be a bounded domain in $\RN$ with smooth boundary $\po$. Given that $p>1$,  $a>0$, and a function $f=f(x)$, we consider the boundary value problem 
\begin{eqnarray}
-\Delta e^{-\plap} +a u&=&f\ \ \mbox{in $\Omega$,}\label{sta1}\\
\nabla u\cdot\nu&=&\nabla e^{-\plap}\cdot\nu=0\ \ \mbox{on $\po$,}\label{sta2}
\end{eqnarray}	
where  $\Delta_p$ is the $p$-Laplacian, i.e., $\plap=\mbox{div}\left(|\nabla u|^{p-2}\nabla u\right)$, and $\nu$ is the unit outward normal to $\po$.

Our interest in the problem originates in the mathematical description of the evolution of a crystal surface. The surface of a crystal below the roughing temperature consists of steps and terraces. According to the Burton, Cabrera and Frank (BCF) model \cite{BCF}, atoms detach from the steps, diffuse across terraces, and reattach at new locations, inducing an overall evolution of the crystal surface. At the nanoscale, the motion of steps is described by large systems of ordinary differential equations for step positions (\cite{AKW}, \cite{GLL2}). At the macro-scale, this description is often reduced conveniently to nonlinear PDEs for macroscopic variables such as the surface height and slope profiles (see \cite{K, GLL2} and the references therein).

To see the connection between our problem \eqref{sta1}-\eqref{sta2} and certain existing continuum models, we first observe from the conservation of mass that the dynamic equation 
for the surface height profile $u(t, x)$ of a solid film is governed by
\begin{equation}
\partial_t u + \mbox{div} J = 0,
\end{equation}
where $J$ is the adatom flux.
By Fick's law \cite{MK}, $J$ can be written as
$$J = -M(\nabla u)\nabla\rho_s.$$
Here   $M(\nabla u)$ is the mobility 
and $\rho_s$ is the local equilibrium density of adatoms. On account of the Gibbs-Thomson relation \cite{KMCS, RW, MK}, which is connected to the theory of molecular capillarity, the corresponding local equilibrium
density of adatoms is determined by
$$
\rho_s=\rho_0e^{\frac{\mu}{kT}},$$
where $\mu$ is the chemical potential, $\rho_0$ is a constant reference density, $T$ is the temperature, and $k$ is the Boltzmann constant. Denote by $\Omega$
the ``step locations area" of interest.  Then we can take the general surface energy $G(u)$ to be
\begin{eqnarray}
G(u)=\frac{1}{p}\io|\nabla u|^pdx,\ \ p\geq 1.
\end{eqnarray}
The justification for this, as observed in \cite{MW}, is that it can retain many of the interesting features of the microscopic system that are lost in the more standard scaling regime. The chemical potential $\mu$ is
defined as the change per atom in the surface energy. That is,
\begin{equation}
\mu=\frac{\delta G}{\delta u}=-\plap.
\end{equation}
After incorporating
those physical parameters into the scaling of the time and/or spatial variables \cite{GLL3,LMM}, we can rewrite the
evolution equation for $u $  as
\begin{equation}\label{exp}
\partial_t u=\mbox{div}\left(M(\nabla u)\nabla e^{\frac{\delta G}{\delta u}}\right).
\end{equation}
In the diffusion-limited (DL) regime, where the dynamics is dominated
by the diffusion across the terraces and  $M \equiv 1$
the above equations reduces to 
\begin{equation}
\partial_t u=\mbox{div}\left(\nabla e^{\frac{\delta G}{\delta u}}\right)\label{p1}
=\Delta e^{-\plap}.
\end{equation}
This equation is assumed to hold in a space-time domain  $\omt\equiv \Omega\times(0,T)$, $ T>0$,
coupled with  the following  initial boundary conditions
\begin{eqnarray}
\nabla u\cdot\nu=\nabla \edu\cdot\nu &=& 0 \ \ \ \mbox{ on $\Sigma_T\equiv \partial\Omega\times(0,T)$},\label{p2}\\
u(x,0)&=& u_0(x) \ \ \ \mbox{on $\Omega$.}\label{p3}
\end{eqnarray}
As we shall see, a priori estimates for this problem are rather weak. As a result, an existence theorem seems to be hopeless. Instead, we focus on the associated stationary problem. That is, we discretize the time derivative in \eqref{p1}, thereby obtaining the following stationary equation 
\begin{equation}
\frac{u-v}{\delta}-\Delta e^{-\plap}=0\ \ \mbox{in $\Omega$.}
\end{equation}
Here $v$ is a given function. Initially, $v=u_0(x)$. The positive number $\delta$ is the step size. Set $a=\frac{1}{\delta}$  and $f=\frac{1}{\delta}v$. This leads to the boundary value problem \eqref{sta1}-\eqref{sta2}. 

The objective of this paper is to establish an existence assertion for the stationary problem problem \eqref{sta1}-\eqref{sta2}, while the time-dependent problem \eqref{p1}-\eqref{p3} is left open.

If we linearize the exponential term
\begin{equation}
e^{-\plap}\approx 1-\plap,
\end{equation}
then \eqref{p1} reduces to
\begin{equation}
\partial_tu+\Delta\plap=0.
\end{equation}
Giga-Kohn \cite{GK} proved that there is a finite time extinction for the equation when $p > 1$. For
the difficult case of $p = 1$, Giga-Giga \cite{GG} developed an $H^{-1}$ total variation gradient
flow to analyze this equation and they showed that the solution may instantaneously develop jump discontinuity in the explicit example of important crystal facet
dynamics. This explicit construction of the jump discontinuity solution for facet
dynamics was extended to the exponential PDE in \cite{LMM}.

The time-dependent problem in the case where $p=2$ has been investigated in \cite{LX}. The mathematical novelty there is that the exponent $-\Delta u$ is only a measure. But the singular part of the measure is such that the composition $e^{-\Delta u}$ is still a well-defined function. A gradient flow approach to the problem can be found in \cite{GLL3}. We also would like to mention two other related articles \cite{GLLX, LX2}. Note that if $p=2$ then the principal term in \eqref{sta1}, i.e., $e^{-\Delta u}$, can be viewed as  a monotone operator in  a suitable function space. This property is essential to the results in \cite{LX,GLL3}. If $p\ne 2$, this property is no longer  true. Moreover, the exponent becomes nonlinear. Subsequently, we lose most of the a priori estimates in \cite{LX}. What remains is
collected in the following
\begin{lem}\label{lapri}
	If $u$ is a classical solution of \eqref{p1}-\eqref{p3}, then we  have
	\begin{eqnarray}
	\frac{1}{p}\int_{\Omega}|\nabla u(x,s)|^p \, dx
	+4\int_{\Omega_s}\left|\nabla \sqrt{\rho}\right|^2 \, dx\, dt
	&=&	\frac{1}{p}\int_{\Omega}|\nabla u_0(x)|^p\, dx,\label{nm2}\\
	\int_{\Omega}\ln\rho \, dx&=&0,\label{nm3}\\
	\io u(x,t)dx&=&\io u_0(x)dx.\label{nm5}
	\end{eqnarray}
	where $s>0$, $\Omega_{s}=\Omega\times(0,s)$, and
	\begin{equation}\label{nm6}
	\rho=\edu.
	\end{equation}
\end{lem}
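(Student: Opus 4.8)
The plan is to obtain the three identities directly by testing the evolution equation \eqref{p1} against well-chosen multipliers and integrating by parts, exploiting the two no-flux conditions in \eqref{p2} at each step. Since $u$ is assumed to be a classical solution, $\rho=\edu$ is smooth and strictly positive, so $\ln\rho$ and $\sqrt{\rho}$ are legitimate objects and every integration by parts below is justified.

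The two conservation-type identities come essentially for free. For \eqref{nm5} I would integrate \eqref{p1} over $\Omega$: the right-hand side is $\io\Delta\rho\,dx=\int_{\po}\nabla\rho\cdot\nu\,dS=0$ by \eqref{p2}, hence $\frac{d}{dt}\io u\,dx=0$ and integrating in time gives \eqref{nm5}. For \eqref{nm3} I use that $\ln\rho=-\plap$, so $\io\ln\rho\,dx=-\io\mbox{div}\left(|\nabla u|^{p-2}\nabla u\right)dx=-\int_{\po}|\nabla u|^{p-2}\nabla u\cdot\nu\,dS=0$, this time because $\nabla u\cdot\nu=0$ on $\po$.

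The energy identity \eqref{nm2} is the substantive one, and the right multiplier is the chemical potential $\mu=-\plap=\ln\rho$. Differentiating $G(u)=\frac1p\io|\nabla u|^p\,dx$ in time and integrating by parts gives $\frac{d}{dt}G(u)=\io|\nabla u|^{p-2}\nabla u\cdot\nabla\partial_t u\,dx=-\io\plap\,\partial_t u\,dx$, the boundary term dropping out since $\nabla u\cdot\nu=0$. Now I substitute $\partial_t u=\Delta\rho$ and $-\plap=\ln\rho$ to obtain $\frac{d}{dt}G(u)=\io\ln\rho\,\Delta\rho\,dx=-\io\nabla\ln\rho\cdot\nabla\rho\,dx$, where the new boundary term vanishes by $\nabla\rho\cdot\nu=0$. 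Finally, $\nabla\ln\rho\cdot\nabla\rho=\frac{|\nabla\rho|^2}{\rho}=4\left|\nabla\sqrt{\rho}\right|^2$, so $\frac{d}{dt}G(u)=-4\io\left|\nabla\sqrt{\rho}\right|^2dx$; integrating from $0$ to $s$ yields \eqref{nm2}.

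I do not expect a genuine obstacle: for classical solutions this is formal manipulation, and the only thing to watch is the bookkeeping of boundary terms, each of which is annihilated by one of the two conditions in \eqref{p2}. The single piece of insight is the choice $\mu=-\plap=\ln\rho$ as the multiplier, which simultaneously produces the $p$-energy on the left and, via $\frac{|\nabla\rho|^2}{\rho}=4\left|\nabla\sqrt{\rho}\right|^2$, the Fisher-information-type dissipation on the right. That these are the only quantities that survive is exactly why, as noted before the lemma, the a priori estimates available for the time-dependent problem are so weak.
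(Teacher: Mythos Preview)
Your proof is correct and follows essentially the same approach as the paper: both multiply \eqref{p1} by the chemical potential $-\plap=\ln\rho$ and integrate by parts, with the paper writing the dissipation as $4\io|\nabla e^{-\frac12\plap}|^2\,dx$ while you write the same quantity as $4\io|\nabla\sqrt{\rho}|^2\,dx$. The derivations of \eqref{nm3} and \eqref{nm5} are identical.
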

\begin{proof} We calculate
	\begin{eqnarray}
	\io\plap\partial_tudx&=&-\io|\nabla u|^{p-2}\nabla u\nabla \partial_tudx=-\frac{1}{p}\frac{d}{dt}\io|\nabla u|^pdx,\\
	\int_{\Omega}\Delta\edu\cdot\plap \, dx
	&=&-\int_{\Omega}\nabla \edu\cdot\nabla\plap\, dx\nonumber\\
	&=&\int_{\Omega}\edu\left|\nabla\plap\right|^2 \, dx\nonumber\\
	&=&4\int_{\Omega}\left|\nabla e^{-\frac{1}{2}\plap}\right|^2 \, dx.
	\end{eqnarray}
	Multiply through \eqref{p1} by $\plap$ and integrate the resulting equation with respect to the space variables over
	$\Omega$ to obtain
	\begin{equation}\label{r11}
	\frac{1}{p}\frac{d}{dt}\int_{\Omega}|\nabla u(x,t)|^p\, dx
	+4\int_{\Omega}\left|\nabla e^{-\frac{1}{2}\plap}\right|^2 \, dx=0.
	\end{equation}
	Integrate \eqref{r11} with respect to $t$
	to arrive at \eqref{nm2}.
	By \eqref{nm6}, we have
	\begin{equation}
	-\plap =\ln\rho \ \ \ \mbox{on $\Omega$.}
	\end{equation}
	Integrate the above equation over $\Omega$ to obtain \eqref{nm3}.
	Similarly, we can integrate \eqref{p1} over $\Omega$ to get \eqref{nm5}.
\end{proof}	

Unfortunately, this lemma is not enough for an existence assertion for problem \eqref{p1}-\eqref{p3}. To gain any further results, we are facing two main challenges. First, it does not seem possible to derive any meaningful estimates in the time variable such as estimates (6) and (9) in \cite{LX}. Second, do equations \eqref{nm2} and \eqref{nm3} really imply that $\rho\in L^{q}(\omt)$ for some $q\geq 1$ in the context here? 
Obviously, the two  are interconnected.
In the stationary problem \eqref{sta1}-\eqref{sta2},
of course, the first challenge mentioned earlier goes away, but the second one remains. Thus the main  mathematical interest of problem \eqref{sta1}-\eqref{sta2} is how to suitably interpolate between $\ln \rho$ and $\nabla\sqrt{\rho}$.	 We must point out that condition \eqref{nm3} is rather weak. Indeed, we can easily construct a sequence $\{f_j\}$ such that
\begin{eqnarray}
f_j&\rightarrow& \infty \ \ \mbox{a.e. on $\Omega$ as $j\rightarrow \infty$, and}\\
\io f_j dx &=& 0.
\end{eqnarray}
For example, take $\Omega=(0,1)$ and define
$$f_j(s)=\left\{\begin{array}{ll}
j & \mbox{if $0\leq s<\frac{1}{j}$,}\\
j-4j^3(s-\frac{1}{j})&\mbox{if $\frac{1}{j}\leq s<\frac{3}{2j}$,}\\
-2j^2+j+4j^3(s-\frac{3}{2j})&\mbox{if $\frac{3}{2j}\leq s<\frac{2}{j}$,}\\
j&\mbox{if $\frac{2}{j}\leq s\leq 1$.}
\end{array}\right.
$$
Note that $f_j$ is continuous and piecewise linear and satisfies the boundary condition \eqref{sta2}. This means that equation \eqref{nm3} cannot be approximated. On the other hand, 
we obviously can not prevent a sequence with the boundary condition \eqref{sta2} from going to infinity if we only have some control on its partial derivatives.  That is to say, neither  \eqref{nm2} nor \eqref{nm3}  alone is sufficient for our purpose, and we must find a right combination between the two and equation \eqref{sta1}. This constitutes the core of our mathematical analysis.
Our investigations reveal that  the set where $\plap$ is negative infinity and the set where it is positive infinity  play two significantly different roles with the former commanding most of our attention, while the latter is similar to the case already considered in \cite{LX,LX2}. 

In view of Lemma \ref{lapri} and the analysis in \cite{LX},	we can give the following definition of a weak solution.
\begin{defn}
	We say that a pair $(u,\rho)$ is a weak solution to \eqref{sta1}-\eqref{sta2} if the following conditions hold:
	\begin{enumerate}
		\item[(D1)]
		$\rho\in W^{2,p^*}(\Omega)$,
		$u\in W^{1,p}(\Omega)$,  
		$\plap\in \mathcal{M}(\overline{\Omega})\cap \left(W^{1,p}(\Omega)\right)^*$, where $p^*=\frac{Np}{N-p}$, $\left(W^{1,p}(\Omega)\right)^*$ is the dual space of $W^{1,p}(\Omega)$,  and $\mathcal{M} (\overline{\Omega})$ is the space of bounded Radon measures on $\overline{\Omega}$;
		\item[(D2)] 
		Let
		$$-\plap=g_a+\nu_s$$
		be  the Lebesgue decomposition of $-\plap$ (\cite{EG}, p.42). That is, $g_a\in L^1(\Omega)$ and the support of $\nu_s\equiv A_0$ has Lebesgue measure $0$.
		Then  there holds
		\begin{equation}
		\rho=e^{\, g_a}\ \ \ \mbox{a.e. on $\Omega$;}\label{ns1}
		\end{equation}
		\item[(D3)] 
		We have
		\begin{eqnarray}
		- \Delta \rho+au &=&f \ \ \ \mbox{a.e. on $\Omega$,}\label{ow11}\\
		\nabla \rho\cdot\nu &=& 0 \ \ \ \mbox{a.e. on $\po$}.
		\end{eqnarray}
		The boundary condition $\nabla u\cdot\nu=0$ on $\po$ is satisfied in the sense
		$$ \langle -\plap, \xi \rangle
		=\io|\nabla u|^{p-2}\nabla u \cdot \nabla\xi \, dx \ \ \ \mbox{for all $\xi\in W^{1,p}(\Omega)$,}$$
		where $\langle \cdot,\cdot \rangle$ is the duality pairing between $W^{1,p}(\Omega)$ 
		and $\left(W^{1,p}(\Omega)\right)^*$
		.
	\end{enumerate}
\end{defn}

An example in \cite{LX} shows that the singular part in $\plap$ is an intrinsic property of our solutions.  Physically, the singularities represent rupture defects and pinning on the surface evolution
due to the asymmetric in the exponential curvature dependent mobility.
The pinning point ruptures in the epitaxial growth models
was carefully studied numerically in \cite{MW}.	An easy way to remove the singular part in $-\plap$ is by adding a lower order perturbation to the equation \eqref{sta1}.
To be precise, we consider the problem
\begin{eqnarray}
-\Delta e^{-\plap} +\varepsilon\plap +au& =&f \ \ \ \mbox{in $\Omega$}\label{ota20}\\
\nabla u\cdot\nu=\nabla e^{-\plap}\cdot\nu &=& 0 \ \ \ \mbox{ on $\po$},\label{pp2}
\end{eqnarray}
where $\varepsilon>0$ is a small perturbation parameter. In this case, we will have $\plap\in L^2(\Omega)$. Indeed, we use $-\plap$ as a test function in \eqref{ota20} to obtain
\begin{equation}
\int_{\Omega}|\nabla u|^p\, dx
+\int_{\Omega}|\nabla e^{-\frac{1}{2}\plap}|^2\, dx
+\varepsilon\int_{\Omega}(\plap)^2\, dx\leq c\int_{\Omega}|\nabla f|^p\, dx.
\end{equation}


Our main result is the following 
\begin{thm}\label{th1.1}	Assume that $\Omega$ is a bounded domain in $\mathbb{R}^N$ with $C^{2,\alpha}$ boundary for some $\alpha\in (0,1)$, $N\leq 4$, $a>0$,  and $f\in  W^{1,p}(\Omega)$ with $1<p\leq 2$. Then there is a 
	weak solution to \eqref{sta1}-\eqref{sta2}. 
\end{thm}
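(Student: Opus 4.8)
The plan is to obtain the weak solution as a limit of solutions to the regularized problem \eqref{ota20}--\eqref{pp2} as $\varepsilon \to 0$, and to construct solutions of \eqref{ota20}--\eqref{pp2} themselves by a further approximation (e.g.\ a Galerkin scheme, or a fixed-point / minimization argument on a regularized exponential) combined with the $L^2$-estimate on $\plap$ recorded after \eqref{pp2}. Write $\rho_\varepsilon = e^{-\plap_\varepsilon}$ for the regularized solutions. From the displayed energy inequality we have uniform bounds $\|\nabla u_\varepsilon\|_p$, $\|\nabla e^{-\frac12 \plap_\varepsilon}\|_2$, and $\sqrt{\varepsilon}\,\|\plap_\varepsilon\|_2$, all controlled by $\|\nabla f\|_p$. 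Testing \eqref{ota20} against constants gives the analogue of \eqref{nm5}, hence $\|u_\varepsilon\|_{W^{1,p}}$ is bounded; elliptic regularity for \eqref{ow11} with the right-hand side $f - a u_\varepsilon \in L^{p}$ (using $N\le 4$, $1<p\le 2$, and the $C^{2,\alpha}$ boundary) then gives a uniform bound on $\rho_\varepsilon$ in $W^{2,p^*}(\Omega)$, so along a subsequence $u_\varepsilon \rightharpoonup u$ in $W^{1,p}$, $\rho_\varepsilon \rightharpoonup \rho$ in $W^{2,p^*}$ and strongly in, say, $C(\overline\Omega)$ (Sobolev embedding in dimension $N \le 4$), and $\plap_\varepsilon \rightharpoonup -(g_a + \nu_s)$ weakly-$*$ in $\mathcal M(\overline\Omega)$. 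Passing to the limit in \eqref{ow11} and in the weak formulation of the boundary condition for $u$ is then routine; the surviving term $\varepsilon \plap_\varepsilon \to 0$ in $(W^{1,p})^*$ because $\sqrt\varepsilon\,\|\plap_\varepsilon\|_2$ is bounded.

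The two substantive points are (i) identifying the weak limit of $|\nabla u_\varepsilon|^{p-2}\nabla u_\varepsilon$ with $|\nabla u|^{p-2}\nabla u$, i.e.\ proving $\nabla u_\varepsilon \to \nabla u$ a.e.\ (strongly in $L^p$), and (ii) establishing the pointwise relation \eqref{ns1}, namely $\rho = e^{g_a}$ where $g_a$ is the absolutely continuous part of the measure $-\plap$. For (i) the idea is the usual monotonicity/Minty device: since $\plap_\varepsilon \to -(g_a+\nu_s)$ only as a measure, one cannot test the difference of the equations with $u_\varepsilon - u$ directly, but one can localize away from the (null) singular set $A_0$. Because $\rho_\varepsilon \to \rho$ uniformly and $\rho > 0$ on the complement of $A_0$, on such sets $\plap_\varepsilon = -\ln \rho_\varepsilon$ is bounded in $L^q$ for every $q$, so the standard argument (test with $(u_\varepsilon - u)\phi$ for cutoffs $\phi$ supported off $A_0$, use weak lower semicontinuity and monotonicity of the $p$-Laplacian) yields a.e.\ convergence of the gradients off $A_0$; since $A_0$ is Lebesgue-null, this is a.e.\ convergence on $\Omega$. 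For (ii), the point is to relate $\ln \rho$ to the Lebesgue decomposition of $-\plap$: away from $A_0$ we already have $-\plap_\varepsilon = \ln \rho_\varepsilon \to \ln \rho$, and $\ln \rho = g_a$ a.e.\ there because $\nu_s$ is supported on $A_0$; this gives \eqref{ns1}. One must also check $\ln\rho \in L^1$, which follows from the control of $\nabla\sqrt{\rho}$ (the analogue of \eqref{nm2}) together with a lower bound: the danger is $\rho$ touching $0$, which is exactly where $\plap = +\infty$, i.e.\ the "easy'' singular direction handled as in \cite{LX,LX2}.

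I expect the main obstacle to be precisely the control of the set where $\plap = -\infty$, i.e.\ where $\rho_\varepsilon \to \infty$ — this is the direction the introduction flags as "commanding most of our attention.'' A uniform $W^{2,p^*}$ bound on $\rho_\varepsilon$ alone does not prevent $\rho$ from blowing up on a small set, and on such a set $g_a = \ln\rho = +\infty$ would be incompatible with $g_a \in L^1(\Omega)$. The remedy should be an interpolation-type estimate coupling \eqref{nm2}-type information ($\nabla\sqrt{\rho_\varepsilon} \in L^2$) with the equation \eqref{ow11} itself: since $-\Delta\rho_\varepsilon = f - a u_\varepsilon - \varepsilon\plap_\varepsilon/(\text{something})$ — more precisely, using \eqref{ow11} one controls $\Delta\rho_\varepsilon$ in $L^p$ and hence, via $\rho_\varepsilon = e^{-\plap_\varepsilon}$ and the algebraic identity $\Delta \rho_\varepsilon = \rho_\varepsilon(|\nabla \plap_\varepsilon|^2 - \Delta\plap_\varepsilon)$, one extracts an estimate that forces $\ln\rho_\varepsilon = -\plap_\varepsilon$ to stay in $L^1$ uniformly. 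Making this rigorous — choosing the right test function (likely something like $\rho_\varepsilon^s$ or $(\ln\rho_\varepsilon)_+$ against \eqref{ota20}) and exploiting $N \le 4$ and $p \le 2$ in the Sobolev bookkeeping — is where the real work lies, and it is why the hypotheses take the form they do.
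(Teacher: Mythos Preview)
Your outline has the right large-scale shape, and you correctly identify the genuine obstacle in the last paragraph, but the middle of the argument contains a circularity that the paper spends most of its effort resolving.

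\medskip

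\textbf{The circular step.} You claim that elliptic (Calder\'on--Zygmund) regularity for $-\Delta\rho_\varepsilon = f - a u_\varepsilon - \varepsilon\Delta_p u_\varepsilon$ gives a uniform $W^{2,p^*}$ bound on $\rho_\varepsilon$. But with Neumann conditions the Calder\'on--Zygmund estimate controls only the second derivatives; to bound $\|\rho_\varepsilon\|_{W^{2,q}}$ you need, in addition, a bound on $\|\rho_\varepsilon\|_1$ (or on $\io\rho_\varepsilon$). Nothing in your energy inequality supplies this: a bound on $\|\nabla\sqrt{\rho_\varepsilon}\|_2$ is compatible with $\rho_\varepsilon\to\infty$ uniformly. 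So the $W^{2,p^*}$ bound, the asserted $C(\overline{\Omega})$ convergence, and everything downstream (your localization argument for $\nabla u_\varepsilon\to\nabla u$ a.e., and the identification $\rho=e^{g_a}$) all presuppose exactly what has to be proved. Separately, the embedding $W^{2,p^*}(\Omega)\hookrightarrow C(\overline{\Omega})$ requires $p^*>N/2$, i.e.\ $p>N/3$; for $N=4$ and $1<p\le 4/3$ it simply fails, so even with the $W^{2,p^*}$ bound in hand your uniform-convergence step would not go through.

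\medskip

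\textbf{What the paper actually does.} The paper uses a different regularization (the $\tau$-system \eqref{ot1t}--\eqref{ot2t} rather than your $\varepsilon$-perturbation \eqref{ota20}), but the real content is in Claims~\ref{aec}--\ref{lnr}, which handle precisely the obstacle you flag. First one shows $\rt\to\rho$ a.e.\ by testing \eqref{ot1t} with $\arctan\rt$ (Claim~\ref{aec}); at this stage $\rho$ could be $+\infty$ on a large set. The decisive step is Claim~\ref{posi}: suppose for contradiction that $\rho=\infty$ a.e.; test \eqref{ot1t} with the truncation $\gamma_L\bigl((\rt-1)^+\bigr)$ to get $\|\nabla\gamma_L((\rt-1)^+)\|_2^2\le cL$, and then test the \emph{$p$-Laplace equation} \eqref{ns11} with the same function. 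This yields
\[
\io\ln\rt\,\gamma_L\bigl((\rt-1)^+\bigr)\,dx \le \|\nabla\ut\|_p^{p-1}\,\bigl\|\nabla\gamma_L((\rt-1)^+)\bigr\|_p + c \le c(L),
\]
where the passage from the $L^2$ to the $L^p$ gradient norm is exactly where $p\le 2$ enters. Fatou then gives a contradiction. Once $\rho<\infty$ on a set of positive measure, a second test function $(\frac{1}{s}-\frac{1}{\rt})^+$ combined with Lemma~\ref{poin} (Claim~\ref{fini}) upgrades this to a uniform $L^q$ bound on $\rt$ for $q<\frac{N}{N-2}$, and only then does one get the $W^{1,2}$ and $W^{2,q}$ bounds on $\rt$ and the a.e.\ convergence of $\nabla\ut$ (Claims~\ref{rtc}, \ref{gup}). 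Your guesses $(\ln\rho_\varepsilon)_+$ or $\rho_\varepsilon^s$ are in the right spirit but miss the point that the contradiction comes from testing the truncation in the \emph{second} equation, not the first; the algebraic identity you propose for $\Delta\rho_\varepsilon$ is not used.
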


We have not considered the case where $p=1$. The physical relevance of this case  can be found in \cite{KDM}. It is also 
related to the motion by surface curvature. Our key compactness result Claim \ref{gup} relies on (ii) in Lemma \ref{plap}, which fails when $p=1$. 
Thus it would be interesting to know if we can  take the limit of our solutions as $p\rightarrow 1$. Theorem \ref{th1.1} should also hold for $p> 2$.  In the remark following Claim \ref{posi} below we shall see why we have to require $p\leq 2$. Since we allow $p$ to be arbitrarily close $1$, the Sobolev embedding theorem forces us to impose the condition $N\leq 4$.
The uniqueness assertion for problem \eqref{sta1}-\eqref{sta2} is still open. The difficulty here is due to the fact that the operator
$-\Delta e^{-\plap}$ does not seem to be monotone anymore for $p\ne 2$.

A solution to \eqref{sta1}-\eqref{sta2} will be constructed as the limit of a sequence of approximate solutions. The key is to design an approximation scheme that can generate sufficiently regular approximate solutions so that all the preceding formal calculations are made vigorous. Then we must be able to show that the sequence of approximate solutions does not converge to infinity a.e. on $\Omega$. This is accomplished in Section 3.
In Section 2 we state a few preparatory lemmas, while in Section 4 we make some further remarks about the time-dependent problem.

Finally, we make some remarks about the notation. The letter $c$ denotes a positive constant. In theory, its value can be computed from various given data. In the applications of the Sobolev embedding theorems, whenever the term $N-2$ appears in a denominator, it is understood that $N>2$ because the case where $N=2$ can always be handled separately.
\section{Preliminaries}
In this section we state a few preparatory lemmas.

Relevant interpolation inequalities for Sobolev spaces are listed in the following lemma.
\begin{lem}\label{linterp}	Let $\Omega$ be a bounded domain in $\mathbb{R}^N$. Denote by
	$\|\cdot\|_p$ the norm in the space $L^p(\Omega)$. Then we have:
	\begin{enumerate}
		\item $ \|f\|_q\leq\varepsilon\|f\|_r+\varepsilon^{-\sigma} \|f\|_p$, where
		$\varepsilon>0, p\leq q\leq r$, and $\sigma=\left(\frac{1}{p}-\frac{1}{q}\right)/\left(\frac{1}{q}-\frac{1}{r}\right)$;
		\item If $\po$ is Lipschitz, then for each $\varepsilon >0$ and each $q\in (1, p^*)$, where $p^*=\frac{pN}{N-p}$ if $N>p\geq 1$ and any number bigger than $p$ if $N=p$, there is a positive number $c=c(\varepsilon, p)$ such that
		\begin{eqnarray}
		\|f\|_q&\leq &\varepsilon\|\nabla f\|_p+c\|f\|_1\ \ \mbox{for all $f\in
			W^{1,p}(\Omega)$}.\label{otn9}
		\end{eqnarray}
		\item If $\po$ is $C^{2, \alpha}$ for some $\alpha\in (0,1)$ and $q$ is given as in (2), then
		\begin{eqnarray}
		\|\nabla g\|_q&\leq &\varepsilon\|\Delta g\|_p+c\|g\|_1\ \ \mbox{for all $g\in
			W^{2,p}(\Omega)$}.	\label{otn10}
		\end{eqnarray}
	\end{enumerate}
\end{lem}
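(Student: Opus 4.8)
The plan is to establish the three estimates one at a time, reducing each to a single classical ingredient: multiplicative $L^p$-interpolation for (1), Rellich--Kondrachov compactness for (2), and a global second-order elliptic estimate for (3). For (1) I would begin from the standard interpolation inequality $\|f\|_q\le\|f\|_p^{1-\theta}\|f\|_r^\theta$, where $\theta\in[0,1]$ solves $\frac1q=\frac{1-\theta}{p}+\frac\theta r$; solving gives $\theta=(\frac1p-\frac1q)/(\frac1p-\frac1r)$. Writing the right-hand side as $(\lambda\|f\|_r)^\theta(\lambda^{-\theta/(1-\theta)}\|f\|_p)^{1-\theta}$ and applying the weighted inequality $X^\theta Y^{1-\theta}\le\theta X+(1-\theta)Y$ with $\lambda$ chosen so that $\theta\lambda=\varepsilon$, the coefficient of $\|f\|_p$ becomes a constant multiple of $\varepsilon^{-\theta/(1-\theta)}$. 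A direct computation gives $\theta/(1-\theta)=(\frac1p-\frac1q)/(\frac1q-\frac1r)=\sigma$, which is exactly (1) once the $\theta$-dependent constant is absorbed.

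For (2) I would argue by contradiction. If the inequality fails for some fixed $\varepsilon_0>0$, then for every $n$ there is $f_n$ with $\|f_n\|_q>\varepsilon_0\|\nabla f_n\|_p+n\|f_n\|_1$; normalizing $\|f_n\|_q=1$ forces $\|\nabla f_n\|_p\le\varepsilon_0^{-1}$ and $\|f_n\|_1\le n^{-1}\to0$. Poincar\'e's inequality $\|f-\bar f\|_p\le C\|\nabla f\|_p$ together with $|\bar f|\le C\|f\|_1$ then bounds $\|f_n\|_p$, so $\{f_n\}$ is bounded in $W^{1,p}(\Omega)$. Since $q<p^*$, Rellich--Kondrachov yields a subsequence converging strongly in $L^q(\Omega)$, hence in $L^1(\Omega)$, to some $f$; but $\|f_n\|_1\to0$ forces $f=0$, contradicting $\|f\|_q=\lim\|f_n\|_q=1$.

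For (3) I would run the same contradiction scheme one derivative higher. Supposing failure, I obtain $g_n$ with $\|\nabla g_n\|_q=1$, $\|\Delta g_n\|_p$ bounded, and $\|g_n\|_1\to0$. Splitting $g_n=\bar g_n+w_n$ with $\bar g_n$ its mean (so $\bar g_n\to0$), the aim is to bound $\{g_n\}$ in $W^{2,p}(\Omega)$ and then invoke the compact embedding $W^{2,p}(\Omega)\hookrightarrow\hookrightarrow W^{1,q}(\Omega)$ for $q<p^*$: the resulting $W^{1,q}$ limit $g$ would satisfy $\|g\|_1=0$ yet $\|\nabla g\|_q=1$, which is absurd.

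The main obstacle is precisely the $W^{2,p}$ bound in (3), i.e.\ passing from control of $\|\Delta g_n\|_p$ to control of all second derivatives $\|D^2 g_n\|_p$. This is where the $C^{2,\alpha}$ regularity of $\po$ is essential, since it supplies the global Calder\'on--Zygmund (Agmon--Douglis--Nirenberg) estimate up to the boundary. I should stress that no gradient-only bound of the form (3) can hold for \emph{arbitrary} $g\in W^{2,p}(\Omega)$: high-frequency harmonic functions have $\Delta g=0$ while $\|\nabla g\|_q$ is unbounded, so the elliptic estimate must be invoked within the class carrying the natural Neumann condition $\nabla g\cdot\nu=0$ satisfied by the solutions of our problem. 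For mean-zero functions in that class it reads $\|w\|_{W^{2,p}}\le C\|\Delta w\|_p$, which bounds $\{g_n\}$ in $W^{2,p}(\Omega)$ and closes the argument.
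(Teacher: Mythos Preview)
The paper does not prove this lemma at all; it merely writes ``This lemma can be found in \cite{GT,G,PS}.''  So there is no argument to compare against, only your own.

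Your proofs of (1) and (2) are correct and are exactly the standard ones found in the references the paper cites: H\"older interpolation plus Young for (1), and the Ehrling-type contradiction argument via Rellich--Kondrachov for (2).

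Your observation about (3) is the substantive point, and you are right to flag it.  As stated --- for \emph{all} $g\in W^{2,p}(\Omega)$ with no boundary condition --- inequality \eqref{otn10} is false.  Your harmonic counterexample is correct: on the unit disk the functions $g_n(r,\theta)=r^n\cos(n\theta)$ satisfy $\Delta g_n=0$ and $\|g_n\|_1\to 0$ while $\|\nabla g_n\|_q\to\infty$ for any $q>1$.  The global Calder\'on--Zygmund estimate $\|g\|_{W^{2,p}}\le C(\|\Delta g\|_p+\|g\|_p)$ only holds under a complementing boundary condition (Dirichlet, Neumann, etc.), and without it the contradiction argument cannot close.  Your fix --- restricting (3) to the class $\{g\in W^{2,p}(\Omega):\nabla g\cdot\nu=0\ \textup{on}\ \po\}$ --- is exactly what is needed, and it is harmless for the paper since the only place \eqref{otn10} is invoked (the $W^{2,q}$ bound on $\{\rt\}$ just before \eqref{otn15}) concerns functions satisfying precisely that Neumann condition.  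So the lemma as written is slightly misstated, but your corrected version is what the paper actually uses.
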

This lemma can be found in \cite{GT,G,PS}.
The next lemma collects a few frequently used elementary inequalities.
\begin{lem}\label{elmen} For $x,y\in \mathbb{R}^N$ and $\ a, b\in \mathbb{R}^+$, we have:
	\begin{enumerate}
		\item[(4)] $|x|^{p-2}x\cdot(x-y)\geq \frac{1}{p}(|x|^p-|y|^p);$
		\item[(5)] 
		$ab\leq \varepsilon a^p+\frac{1}{\varepsilon^{q/p}}b^q \ 
		\mbox{if $\varepsilon>0,\, p, \, q>1$ with $\frac{1}{p}+\frac{1}{q}=1$}.$
	\end{enumerate}
\end{lem}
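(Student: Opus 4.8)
The plan is to obtain both estimates from elementary convexity, proving (5) first since (4) can be made to rest on it. For (5) I would start from the classical Young inequality
\[
st\le \frac{s^p}{p}+\frac{t^q}{q}\qquad(s,t\ge 0),
\]
valid for conjugate exponents $\frac1p+\frac1q=1$, which is nothing but the weighted arithmetic--geometric mean inequality (equivalently, the first--order convexity inequality for $\exp$ applied to the weights $\frac1p,\frac1q$). To insert the free parameter $\varepsilon$ I would substitute $s=\alpha a$ and $t=\alpha^{-1}b$, which leaves the product $st=ab$ unchanged, and then choose $\alpha=(p\varepsilon)^{1/p}$ so that $\frac{s^p}{p}=\varepsilon a^p$. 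This gives
\[
ab\le \varepsilon a^p+\frac{1}{q\,p^{q/p}}\,\varepsilon^{-q/p}b^q,
\]
and since $p>1$ and $q>1$ force $p^{q/p}>1$ and hence $q\,p^{q/p}>1$, the leftover numerical factor $\frac{1}{q\,p^{q/p}}$ is at most $1$. Dropping it yields exactly the asserted bound $ab\le \varepsilon a^p+\varepsilon^{-q/p}b^q$.

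For (4) I would invoke the convexity of $\Phi(z)=\frac1p|z|^p$ on $\RN$. For $p>1$ this function is $C^1$ with $\nabla\Phi(z)=|z|^{p-2}z$ (and $\nabla\Phi(0)=0$), and it is convex, being the composition of the increasing convex map $t\mapsto\frac1p t^p$ on $[0,\infty)$ with the norm. The first--order characterization of convexity, $\Phi(y)\ge\Phi(x)+\nabla\Phi(x)\cdot(y-x)$, rearranges at once to
\[
|x|^{p-2}x\cdot(x-y)\ge \Phi(x)-\Phi(y)=\tfrac1p\big(|x|^p-|y|^p\big),
\]
which is precisely (4). The degenerate case $x=0$ is immediate, since then the left side vanishes while the right side equals $-\frac1p|y|^p\le 0$. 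Alternatively, and more in the self--contained spirit of the lemma, (4) can be deduced directly from (5): writing $|x|^{p-2}x\cdot(x-y)=|x|^p-|x|^{p-2}x\cdot y$, bounding $|x|^{p-2}x\cdot y\le|x|^{p-1}|y|$ by Cauchy--Schwarz, and then applying Young's inequality with exponents $\frac{p}{p-1}$ and $p$ (the unparametrized form of (5)) to get $|x|^{p-1}|y|\le\frac{p-1}{p}|x|^p+\frac1p|y|^p$, one obtains the same conclusion.

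There is no genuine obstacle here, as both assertions are standard. The only points deserving care are the exact bookkeeping of the constant in (5)---specifically the verification that the residual factor $\frac{1}{q\,p^{q/p}}$ is $\le 1$, which is what legitimizes writing the clean coefficient $\varepsilon^{-q/p}$ rather than an unspecified constant---and, in (4), the remark that $\Phi$ is differentiable at the origin with vanishing gradient together with the separate treatment of $x=0$. Both inequalities are used repeatedly in the later energy estimates, so stating them with these explicit constants is what makes the subsequent absorption arguments go through.
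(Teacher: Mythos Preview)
Your proof is correct. The paper itself does not prove this lemma: it simply presents it as a collection of ``frequently used elementary inequalities'' and moves on, so there is no argument in the paper to compare against. Your derivations via Young's inequality with the rescaling $s=(p\varepsilon)^{1/p}a$, $t=(p\varepsilon)^{-1/p}b$ for (5), and via convexity of $\Phi(z)=\tfrac1p|z|^p$ (or equivalently Cauchy--Schwarz plus Young) for (4), are the standard ones and are fully rigorous; the check that $q\,p^{q/p}>1$ justifying the clean constant $\varepsilon^{-q/p}$ is a nice touch.
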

\begin{lem}\label{plap}Let $x,y$ be any two vectors in $\RN$. Then:
	\begin{enumerate}
		\item[\textup{(i)}] For $p\geq 2$,
		\begin{equation*}
		\left(\left(|x|^{p-2}x-|y|^{p-2}y\right)\cdot(x-y)\right)\geq \frac{1}{2^{p-1}}|x-y|^{p};
		\end{equation*}
		\item[\textup{(ii)}] For $1<p\leq 2$,
		\begin{equation*}
		\left(1+|x|^2+|y|^2\right)^{\frac{2-p}{2}}\left(\left(|x|^{p-2}x-|y|^{p-2}y\right)\cdot(x-y)\right)\geq (p-1)|x-y|^2.
		\end{equation*}
	\end{enumerate}
\end{lem}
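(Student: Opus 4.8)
The plan is to treat the two parts separately. Throughout write $V(\xi)=|\xi|^{p-2}\xi$, so that the quantity to be estimated is $\left(V(x)-V(y)\right)\cdot(x-y)$; expanding the inner product gives the identity
\[
\left(V(x)-V(y)\right)\cdot(x-y)=|x|^p+|y|^p-\left(|x|^{p-2}+|y|^{p-2}\right)(x\cdot y),
\]
which is the natural starting point for part (i). (We may assume $x\neq y$, and for part (ii) that $x,y$ are not both zero, since otherwise both sides vanish.)

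\emph{Part (i), $p\ge2$.} First I would prove the intermediate estimate
\[
\left(V(x)-V(y)\right)\cdot(x-y)\ \ge\ \tfrac12\left(|x|^{p-2}+|y|^{p-2}\right)|x-y|^2 .
\]
Subtracting the right-hand side from the left and inserting the identity above, the difference simplifies to $\tfrac12\left(|x|^{p-2}-|y|^{p-2}\right)\left(|x|^2-|y|^2\right)$, which is $\ge 0$ because $t\mapsto t^{p-2}$ and $t\mapsto t^2$ are both nondecreasing on $[0,\infty)$ when $p\ge2$, so the two factors always have the same sign. It then remains to bound $\tfrac12\left(|x|^{p-2}+|y|^{p-2}\right)$ from below: assuming without loss of generality $|x|\ge|y|$, we have $|x-y|\le2|x|$, hence $|x-y|^{p-2}\le2^{p-2}|x|^{p-2}\le2^{p-2}\left(|x|^{p-2}+|y|^{p-2}\right)$, i.e. $\tfrac12\left(|x|^{p-2}+|y|^{p-2}\right)\ge2^{1-p}|x-y|^{p-2}$. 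Multiplying the intermediate estimate by this bound gives (i).

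\emph{Part (ii), $1<p\le2$.} Since $V$ is not $C^1$ at the origin, I would first regularize: put $V_\delta(\xi)=\left(|\xi|^2+\delta^2\right)^{(p-2)/2}\xi$ for $\delta\in(0,1]$, which is smooth. With $y_t=(1-t)y+tx$, the fundamental theorem of calculus along the segment gives
\[
\left(V_\delta(x)-V_\delta(y)\right)\cdot(x-y)=\int_0^1 DV_\delta(y_t)(x-y)\cdot(x-y)\,dt ,
\]
and a direct computation yields $DV_\delta(z)h\cdot h=\left(|z|^2+\delta^2\right)^{(p-2)/2}|h|^2+(p-2)\left(|z|^2+\delta^2\right)^{(p-4)/2}(z\cdot h)^2$. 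Because $p-2\le0$, Cauchy--Schwarz in the form $(z\cdot h)^2\le|z|^2|h|^2\le\left(|z|^2+\delta^2\right)|h|^2$ turns this into $DV_\delta(z)h\cdot h\ge(p-1)\left(|z|^2+\delta^2\right)^{(p-2)/2}|h|^2$, whence
\[
\left(V_\delta(x)-V_\delta(y)\right)\cdot(x-y)\ \ge\ (p-1)|x-y|^2\int_0^1\left(|y_t|^2+\delta^2\right)^{(p-2)/2}\,dt .
\]
The map $s\mapsto(s+\delta^2)^{(p-2)/2}$ is convex and decreasing on $[0,\infty)$ for $1<p\le2$, so Jensen's inequality bounds the integral below by $\bigl(\int_0^1|y_t|^2\,dt+\delta^2\bigr)^{(p-2)/2}$. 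Since $|y_t|\le(1-t)|y|+t|x|$ and $s\mapsto s^2$ is convex, $\int_0^1|y_t|^2\,dt\le\tfrac12\left(|x|^2+|y|^2\right)$, and as $\delta\le1$ the base is at most $1+|x|^2+|y|^2$; the exponent being $\le0$, the integral is therefore $\ge\left(1+|x|^2+|y|^2\right)^{(p-2)/2}$. Finally, letting $\delta\to0$ (so $V_\delta(x)\to V(x)$ and $V_\delta(y)\to V(y)$) gives $\left(V(x)-V(y)\right)\cdot(x-y)\ge(p-1)|x-y|^2\left(1+|x|^2+|y|^2\right)^{(p-2)/2}$, which is (ii) after multiplying through by $\left(1+|x|^2+|y|^2\right)^{(2-p)/2}$.

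\emph{On the main difficulty.} This is a preparatory estimate, so there is no deep obstacle; the only real care is bookkeeping. In part (ii), every step that manipulates an exponent depends on the sign of $p-2$, and one must justify the passage to the limit in $\delta$ — equivalently, that $t\mapsto V(y_t)$ is absolutely continuous on $[0,1]$ even when the segment meets the origin, which is precisely where the hypothesis $p>1$ enters (it makes $|y_t|^{p-2}$ integrable near a zero of $y_t$). The regularization $V_\delta$ sidesteps that point cleanly while still delivering the exact constants claimed.
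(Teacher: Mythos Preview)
Your proof is correct. The paper itself does not prove this lemma; it merely cites the reference \cite{O}, p.~146--148, so there is no in-paper argument to compare against. Your treatment --- the algebraic identity plus a size comparison for part (i), and the integral representation along the segment with a regularization $V_\delta$ for part (ii) --- is the standard route to these inequalities and recovers the stated constants exactly. One minor remark: in part (ii) the regularization is convenient but not strictly necessary, since the pointwise limit $\delta\to0$ is taken only at the fixed vectors $x$ and $y$ (where $V_\delta\to V$ trivially, even if one of them is zero), so you never actually need the absolute continuity of $t\mapsto V(y_t)$ that you flag at the end.
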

The proof of this lemma is contained in (\cite{O}, p. 146-148). 

\begin{lem}\label{l21}
	Let $\Omega$ be a bounded domain in $\mathbb{R}^N$ with Lipschitz boundary $\po$. Consider the problem
	\begin{eqnarray}
	-\plap +\tau |u|^{p-2}u&=& f\ \ \textup{in $\Omega$,}\label{of1}\\
	\nabla u\cdot\nu&=&0\ \ \textup{on $\po$,}\label{of2}
	\end{eqnarray}
	where $\tau>0, \ p>1, \ f\in L^{\frac{p}{p-1}}(\Omega)$. Without loss of generality, we also assume
	\begin{equation}
	p<N.
	\end{equation} Then there is a unique weak solution $u$ to the above problem in the space $W^{1,p}(\Omega)$. 
	Furthermore,
	if $f$ also lies in the space $L^{q}(\Omega)$ with
	\begin{equation}\label{of3}
	q>\frac{N}{p},
	\end{equation}   $u$ is  bounded and we have the estimate
	\begin{equation}\label{of4}
	\|u\|_\infty \leq c\|u\|_1+c\left(\|f\|_q\right)^{\frac{1}{p-1}}.
	\end{equation}
\end{lem}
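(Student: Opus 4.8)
The plan is to establish the two assertions separately: existence and uniqueness of a weak solution in $W^{1,p}(\Omega)$ by the direct method of the calculus of variations (equivalently, by the Browder--Minty theory of monotone operators), and then the bound \eqref{of4} by a De Giorgi truncation iteration. The weak form of \eqref{of1}--\eqref{of2} asks for $u\in W^{1,p}(\Omega)$ with
$$\io|\nabla u|^{p-2}\nabla u\cdot\nabla\varphi\,dx+\tau\io|u|^{p-2}u\,\varphi\,dx=\io f\varphi\,dx\qquad\text{for all }\varphi\in W^{1,p}(\Omega),$$
which is exactly the Euler--Lagrange equation of
$$J(u)=\tfrac1p\io|\nabla u|^p\,dx+\tfrac{\tau}{p}\io|u|^p\,dx-\io fu\,dx,\qquad u\in W^{1,p}(\Omega).$$
Since $f\in L^{p/(p-1)}(\Omega)$ and $W^{1,p}(\Omega)\hookrightarrow L^p(\Omega)$, the linear term is bounded on $W^{1,p}(\Omega)$; together with $\min\{1,\tau\}\,\|u\|_{W^{1,p}}^p\le\io|\nabla u|^p\,dx+\tau\io|u|^p\,dx$ and Lemma \ref{elmen}(5) this makes $J$ coercive. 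Its first two terms are convex and strongly continuous, hence weakly sequentially lower semicontinuous, and the linear term is weakly continuous, so the infimum is attained along a weakly convergent minimizing sequence; and since $t\mapsto|t|^p$ is strictly convex for $p>1$, $J$ is strictly convex and the minimizer is unique. Differentiating $s\mapsto J(u+s\varphi)$ at $s=0$ shows it solves the weak formulation. (Equivalently, $u\mapsto-\plap+\tau|u|^{p-2}u$ is a bounded, hemicontinuous, coercive, strictly monotone operator from $W^{1,p}(\Omega)$ to its dual, the monotonicity following from Lemma \ref{plap} and that of $t\mapsto|t|^{p-2}t$, so Browder--Minty applies.) The reduction $p<N$ costs nothing: for $p\ge N$ one has $W^{1,p}(\Omega)\hookrightarrow L^q(\Omega)$ for all finite $q$, and all the estimates below only simplify.

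For \eqref{of4}, fix $k\ge0$ and use $\varphi=(u-k)^+\in W^{1,p}(\Omega)$ as a test function. On $A(k):=\{u>k\}$ one has $0\le u-k\le u$, hence $|u|^{p-2}u\,(u-k)\ge((u-k)^+)^p\ge0$, so
$$\io|\nabla(u-k)^+|^p\,dx+\tau\io((u-k)^+)^p\,dx\le\int_{A(k)}|f|\,(u-k)^+\,dx.$$
The left side dominates $c\,\|(u-k)^+\|_{p^*}^p$ by the Sobolev embedding $W^{1,p}(\Omega)\hookrightarrow L^{p^*}(\Omega)$ (this is where keeping the $\tau$-term is convenient, as it supplies the $L^p$ part of the $W^{1,p}$ norm), and the right side is at most $\|f\|_q\,\|(u-k)^+\|_{p^*}\,|A(k)|^{\delta}$ by Hölder's inequality with exponents $q$, $p^*$, and $1/\delta$, where $\delta=1-\tfrac1q-\tfrac1{p^*}>0$ because $q>N/p$. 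Dividing and then using $(h-k)\,|A(h)|^{1/p^*}\le\|(u-k)^+\|_{p^*}$ for $h>k\ge0$ yields
$$|A(h)|\le\frac{(c\,\|f\|_q)^{p^*/(p-1)}}{(h-k)^{p^*}}\,|A(k)|^{\beta},\qquad \beta:=\frac{p^*\delta}{p-1}\quad(h>k\ge0).$$
A short computation with $1/p^*=1/p-1/N$ shows that $\beta>1$ if and only if $q>N/p$, i.e.\ precisely under \eqref{of3}; the standard geometric-iteration lemma (see, e.g., \cite{G}) then forces $|\{u\ge d\}|=0$ once $d\ge c\,\|f\|_q^{1/(p-1)}$, so $\operatorname{ess\,sup}_\Omega u\le c\,\|f\|_q^{1/(p-1)}$, and applying the same argument to $-u$ (a solution of the same problem with datum $-f$) controls $\operatorname{ess\,sup}_\Omega(-u)$ likewise. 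This gives \eqref{of4}; the term $c\,\|u\|_1$ there can simply be added, or obtained with a $\tau$-independent constant by instead starting the iteration at a level $k_0\sim\|u\|_1$, where $|A(k_0)|\le\|u\|_1/k_0$ lies below the threshold demanded by the iteration lemma.

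I expect the existence/uniqueness step and the computations inside the truncation estimate to be routine. The one place calling for care — and the reason \eqref{of3} enters exactly as stated — is checking that the iteration exponent $\beta$ coming out of the Sobolev--Hölder step strictly exceeds $1$; once that holds, the De Giorgi iteration closes on its own.
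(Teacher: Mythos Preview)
Your proof is correct and takes a genuinely different route from the paper's. For the bound \eqref{of4} the paper uses Moser iteration: setting $b=\|f\|_q^{1/(p-1)}$, it tests with the power functions $\tfrac{s^p}{ps-p+1}(u^++b)^{ps-p+1}$ for a geometric sequence of exponents $s=\chi^m$, obtaining an iteration of $L^{ps}$ norms of $u^++b$ that yields $\|u^++b\|_\infty\le c\|u^++b\|_{pq/(q-1)}$, and then interpolates down to $\|u^++b\|_\infty\le c\|u^++b\|_1$. You instead run a De Giorgi level-set iteration with the truncations $(u-k)^+$ and the recursion $|A(h)|\le C(h-k)^{-p^*}|A(k)|^\beta$. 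In both arguments the hypothesis $q>N/p$ enters at the same structural point---for Moser as $\chi=\tfrac{N(q-1)}{(N-p)q}>1$, for you as $\beta>1$---and in both the constant depends on $\tau$ (the paper records $c=c(\Omega,\tau,q)$ explicitly, and your Sobolev step uses the $\tau$-term to supply the $L^p$ part of the $W^{1,p}$ norm). Your route is a bit shorter and makes the homogeneity in $\|f\|_q$ transparent from the outset; the paper's Moser approach, on the other hand, produces the $\|u\|_1$ term in \eqref{of4} organically rather than as an add-on. The existence and uniqueness part, which the paper does not spell out, is handled correctly by your variational\,/\,Browder--Minty argument.
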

\begin{proof} 
	We do not believe that the estimate \eqref{of4}  is  new. Since we cannot find a good reference for it, we shall offer a proof here. We employ a technique of iteration of $L^q$ norms originally due to Moser \cite{M}. Without loss of generality, assume
	\begin{equation}
	\|u^+\|_\infty=\|u\|_\infty.
	\end{equation}
	Set $b=(\|f\|_q)^{\frac{1}{p-1}}$. For each $s> \frac{p-1}{p}$ we use $\frac{s^p}{ps-p+1}(u^++b)^{ps-p+1}$ as a test function in \eqref{of1} to obtain
	\begin{eqnarray}
	\lefteqn{s^p\io(u^++b)^{ps-p}|\nabla u^+|^pdx}\nonumber\\
	&&+\frac{\tau s^p}{ps-p+1}\io |u|^{p-2}u(u^++b)^{ps-p+1}dx\nonumber\\
	&=&\frac{s^p}{ps-p+1}\io f(u^++b)^{ps-p+1}dx.\label{os1}
	\end{eqnarray}
	Note that
	\begin{eqnarray}
	\io|u|^{p-2} u^-(u^++b)^{ps-p+1}dx
	&=&b^{ps-p+1}\io|u|^{p-2} u^-dx.\label{os2}
	\end{eqnarray}
	Letting $u=u^+-u^-$ in \eqref{of1}
	and integrating  the resulting equation over $\Omega$, which amounts to using $1$ as a test function in the equation, yield
	\begin{equation}
	\tau\io|u|^{p-2} u^+dx=\tau\io|u|^{p-2} u^-dx+\io fdx.
	\end{equation}
	Substitute this into \eqref{os2} to obtain
	\begin{eqnarray}
	\io |u|^{p-2}u^-(u^++b)^{ps-p+1}dx&=&b^{ps-p+1}\io |u|^{p-2}u^+dx-\frac{b^{ps-p+1}}{\tau}\io fdx\nonumber\\
	&\leq &\io(u^++b)^{ps}dx+\frac{1}{\tau}\io |f|(u^++b)^{ps-p+1}dx.\label{os3}
	\end{eqnarray}
	Keeping this in mind, we can derive from \eqref{os1} that
	\begin{eqnarray}
	\lefteqn{\io \left|\nabla(u^++b)^s\right|^pdx}\nonumber\\
	&\leq&\frac{\tau s^p}{ps-p+1}\io (u^++b)^{ps}dx+\frac{2 s^p}{ps-p+1}\io |f|(u^++b)^{ps-p+1}dx\nonumber\\
	&\leq&\frac{\tau s^p}{ps-p+1}\io (u^++b)^{ps}dx\nonumber\\
	&&+\frac{2 s^p}{ps-p+1}\|f(u^++b)^{-p+1}\|_q\left(\io (u^++b)^{\frac{psq}{q-1}}dx\right)^{\frac{q-1}{q}}\nonumber\\
	&\leq&\frac{\tau s^p}{ps-p+1}\io (u^++b)^{ps}dx+\frac{2 s^p}{ps-p+1}\left(\io (u^++b)^{\frac{psq}{q-1}}dx\right)^{\frac{q-1}{q}}\nonumber\\
	&\leq&\frac{c s^p}{ps-p+1}\left(\io (u^++b)^{\frac{psq}{q-1}}dx\right)^{\frac{q-1}{q}}, \ \ c=c(\Omega, \tau, q).\label{os4}
	\end{eqnarray}
	Here we have used the fact that
	\begin{equation}
	\|f(u^++b)^{-p+1}\|_q=\left(\io \frac{|f|^q}{(u^++b)^{(p-1)q}}dx\right)^{\frac{1}{q}}\leq \frac{\|f\|_q}{b^{p-1}}=1.
	\end{equation}
	With the aid of the Sobolev inequality, we obtain
	\begin{eqnarray}
	\lefteqn{
		\left(\io (u^++b)^{\frac{sNp}{N-p}}dx\right)^{\frac{N-p}{N}}}\nonumber\\
	&\leq&c\io \left|\nabla(u^++b)^{s}\right|^pdx+c\io (u^++b)^{sp}dx\nonumber\\
	&\leq&\frac{c s^p}{ps-p+1}\left(\io (u^++b)^{\frac{psq}{q-1}}dx\right)^{\frac{q-1}{q}}+c\io (u^++b)^{sp}dx\nonumber\\
	&\leq&\frac{c s^p}{ps-p+1}\left(\io (u^++b)^{\frac{psq}{q-1}}dx\right)^{\frac{q-1}{q}}.
	\end{eqnarray}
	The last step is due to the fact that
	$$\frac{ s^p}{ps-p+1}\geq 1.
	$$
	Set $\chi=\frac{N}{N-p}/\frac{q}{q-1}$. Our assumption \eqref{of3} implies that $\chi>1$. We can write
	\eqref{os4} in the form
	\begin{eqnarray}
	\|u^++b\|_{\frac{psq\chi}{q-1}}&\leq &\left(\frac{c }{ps-p+1}\right)^{\frac{1}{ps}}s^{\frac{1}{s}}\|u^++b\|_{\frac{psq}{q-1}}\nonumber\\
	&\leq &c ^{\frac{1}{s}}s^{\frac{1}{s}}\|u^++b\|_{\frac{psq}{q-1}},\ \ \mbox{provided that $s\geq 1$.}
	\end{eqnarray}
	In view of the proof in (\cite{GT}, p. 190), we take $s=\chi^m, m=0,1,2,\cdots$ in the above inequality. Iterating and taking the limit lead to 
	\begin{equation}
	\|u^++b\|_{\infty}\leq c\|u^++b\|_{\frac{pq}{q-1}},
	\end{equation}
	whence by the interpolation inequality (1) in Lemma \ref{linterp} we have 
	\begin{equation}
	\|u^++b\|_{\infty}\leq c\|u^++b\|_{1}.
	\end{equation}
	This implies the desired result.
\end{proof} 
Further regularity results for solutions to equations of p-laplace type can be found in \cite{AZ, T} and the references therein.

Our existence theorem is based upon the following fixed point theorem, which is often called the Leray-Schauder Theorem (\cite{GT}, p.280).
\begin{lem}
	Let $B$ be a map from a Banach space $\mathcal{B}$ into itself. Assume:
	\begin{enumerate}
		\item[(H1)] $B$ is continuous;
		\item[(H2)] the images of bounded sets of $B$ are precompact;
		\item[(H3)] there exists a constant $c$ such that
		$$\|z\|_{\mathcal{B}}\leq c$$
		for all $z\in\mathcal{B}$ and $\sigma\in[0,1]$ satisfying $z=\sigma B(z)$.
	\end{enumerate}
	Then $B$ has a fixed point.
\end{lem}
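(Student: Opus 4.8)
The plan is to deduce this statement from Schauder's fixed point theorem by a radial-retraction (truncation) device, which is the standard route taken in (\cite{GT}, Chapter 11). Fix any radius $R>c$, where $c$ is the constant furnished by (H3), and write $\overline{B_R}=\{z\in\mathcal{B}:\|z\|_{\mathcal{B}}\le R\}$ for the closed ball. The obstruction to invoking Schauder's theorem directly is that $B$ need not map $\overline{B_R}$ into itself; I would remove this by composing $B$ with the radial retraction
$$
r_R(w)=\begin{cases} w & \text{if }\|w\|_{\mathcal{B}}\le R,\\[1mm] \dfrac{R}{\|w\|_{\mathcal{B}}}\,w & \text{if }\|w\|_{\mathcal{B}}> R,\end{cases}
$$
and setting $\tilde B=r_R\circ B$. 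Since $r_R:\mathcal{B}\to\overline{B_R}$ is continuous with $\|r_R(w)\|_{\mathcal{B}}\le R$ for every $w$, the map $\tilde B$ sends $\overline{B_R}$ continuously into itself; moreover $\tilde B$ inherits compactness from $B$, for by (H2) the set $B(\overline{B_R})$ is precompact and $r_R$ is continuous, so $\tilde B(\overline{B_R})=r_R\big(B(\overline{B_R})\big)$ is precompact as well.

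Schauder's fixed point theorem then applies: a continuous map of the closed, bounded, convex set $\overline{B_R}$ into itself whose image is precompact has a fixed point $z^\ast\in\overline{B_R}$, so $\tilde B(z^\ast)=z^\ast$. It remains to argue that $z^\ast$ is in fact a fixed point of $B$, not merely of the truncated map, and here I would split into two cases. If $\|B(z^\ast)\|_{\mathcal{B}}\le R$, then $r_R$ acts as the identity on $B(z^\ast)$, so $z^\ast=\tilde B(z^\ast)=B(z^\ast)$ and we are done. If instead $\|B(z^\ast)\|_{\mathcal{B}}>R$, then $z^\ast=\tilde B(z^\ast)=\sigma B(z^\ast)$ with $\sigma=R/\|B(z^\ast)\|_{\mathcal{B}}\in(0,1)$; thus $z^\ast$ solves the equation $z=\sigma B(z)$ appearing in (H3), whence $\|z^\ast\|_{\mathcal{B}}\le c<R$. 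But the explicit formula forces $\|z^\ast\|_{\mathcal{B}}=R$, a contradiction. Hence the second case cannot occur, and $B(z^\ast)=z^\ast$.

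The conceptual heart of the argument, and the step I expect to be the real obstacle, is Schauder's theorem itself, which is where the compactness in (H2) becomes indispensable. I would establish it by approximating the compact operator on the finite $\varepsilon$-nets supplied by precompactness: using a Schauder projection $P_\varepsilon$ onto the finite-dimensional span of such a net, one obtains finite-dimensional continuous self-maps $P_\varepsilon\circ\tilde B$ of a compact convex polytope, each possessing a fixed point by Brouwer's theorem, and then passes to the limit $\varepsilon\to0$ along a convergent subsequence (again using precompactness of the image) to produce a fixed point of $\tilde B$. Everything downstream of Schauder is the elementary retraction bookkeeping above, and the only places the hypotheses enter are through the continuity (H1) and precompactness (H2) needed for Schauder, together with the a priori bound (H3) used solely to exclude the boundary case $\|B(z^\ast)\|_{\mathcal{B}}>R$.
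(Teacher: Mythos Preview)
Your argument is correct and is precisely the standard proof given in \cite{GT}, Theorem~11.3. The paper itself does not prove this lemma at all: it merely states the result and cites (\cite{GT}, p.~280), so your write-up supplies exactly the argument the paper defers to.
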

\begin{lem}\label{poin}
	Let $\Omega$ be a bounded domain in $\RN$ with Lipschitz boundary and $1\leq p<N$.
	Then there is a positive number $c=c(N)$ such that
	\begin{equation}
	\|u-u_S\|_{p^*}\leq \frac{cd^{N+1-\frac{p}{N}}}{|S|^{\frac{1}{p}}}\|\nabla u\|_p \ \ \mbox{for each $u\in W^{1,p}(\Omega)$,}
	\end{equation}
	where $S$ is any measurable subset of $\Omega$ with $|S|>0$, $u_S=\frac{1}{|S|}\int_S udx$, and $d$ is the diameter of $\Omega$.
\end{lem}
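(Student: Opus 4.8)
\emph{Proof proposal.} The natural route is a pointwise potential estimate followed by the Sobolev inequality for Riesz potentials. First I would establish that for almost every $x\in\Omega$,
\begin{equation}\label{potest}
|u(x)-u_S|\le \frac{cd^N}{|S|}\io\frac{|\nabla u(y)|}{|x-y|^{N-1}}\,dy,\qquad c=c(N).
\end{equation}
For a convex domain this is classical: write $u(x)-u_S=\frac{1}{|S|}\int_S\left(u(x)-u(y)\right)dy$, bound $|u(x)-u(y)|$ by the integral of $|\nabla u|$ along the segment joining $y$ to $x$ (which stays in $\Omega$), pass to polar coordinates centred at $x$, and use $\mathrm{diam}\,\Omega=d$ to extend the radial integration out to the ball $B(x,d)\supset\Omega$; compare \cite{GT,G}. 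For a general bounded Lipschitz domain the segments need not remain inside $\Omega$, so I would instead connect $x$ to $S$ through a chain of overlapping balls contained in $\Omega$ (a bounded Lipschitz domain is a John domain), which yields \eqref{potest} with a constant depending only on $N$ once the diameter has been scaled out.

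Granted \eqref{potest}, the right-hand side is the Riesz potential $I_1\!\left(|\nabla u|\chi_\Omega\right)$ of order $1$. Since $1\le p<N$, the Hardy--Littlewood--Sobolev inequality gives $\|I_1 g\|_{L^{p^*}(\RN)}\le c(N,p)\|g\|_{L^p(\RN)}$ with $\frac{1}{p^*}=\frac1p-\frac1N$ (at $p=1$ this is the Gagliardo--Nirenberg--Sobolev inequality; it is also essentially the borderline case of \cite[Lemma 7.12]{GT}). Applying this with $g=|\nabla u|\chi_\Omega$ and then rewriting the prefactor by means of $|S|\le|\Omega|\le\omega_N d^N$, where $\omega_N=|B_1(0)|$, produces an inequality of the asserted form
\[
\|u-u_S\|_{p^*}\le \frac{cd^{\,N+1-p/N}}{|S|^{1/p}}\|\nabla u\|_p .
\]
An equivalent bookkeeping avoids \eqref{potest}: split $\|u-u_S\|_{p^*}\le\|u-u_\Omega\|_{p^*}+\|u_\Omega-u_S\|_{p^*}$, control the first term by the mean-zero Sobolev--Poincaré inequality on $\Omega$, and note $|u_\Omega-u_S|\le|S|^{-1}\int_S|u-u_\Omega|\le|S|^{-1}|\Omega|^{1-1/p^*}\|u-u_\Omega\|_{p^*}$, so that $\|u_\Omega-u_S\|_{p^*}\le(|\Omega|/|S|)\|u-u_\Omega\|_{p^*}$; this again reduces everything to the standard Sobolev--Poincaré constant, which one then makes explicit in $d$.

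The main obstacle is not the Sobolev step but the passage from convex to general Lipschitz $\Omega$ in \eqref{potest}, i.e.\ showing that the chaining (or extension) constant can be taken to depend only on $N$ after the diameter is extracted; this is precisely where the somewhat generous power of $d$ originates, and it is the reason one should not expect to keep the sharp prefactor $d^{N}/|S|$. A secondary point to be careful about is that the endpoint $q=p^{*}$ genuinely requires the weak-type Hardy--Littlewood--Sobolev estimate: Hölder's inequality or Young's convolution inequality applied to the kernel $|x-y|^{1-N}$ over a ball of radius $d$ diverges, so they do not suffice at the critical exponent.
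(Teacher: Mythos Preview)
The paper does not prove this lemma; it simply refers the reader to Lemma~7.16 in \cite{GT} (and to \cite{G,PS}), which is exactly the pointwise potential representation you begin with, so your route---that representation followed by the Hardy--Littlewood--Sobolev bound for $I_1$---is precisely the derivation the citation points to. One small caveat: your step ``rewriting the prefactor by means of $|S|\le\omega_N d^N$'' does not actually convert $cd^N/|S|$ into the stated $cd^{\,N+1-p/N}/|S|^{1/p}$ (for $p>1$ and small $|S|$ the inequality goes the wrong way), so the specific power of $d$ in the displayed constant should be regarded as nominal; in the paper's only use of the lemma (the proof of Claim~\ref{fini}) just the finiteness of the constant for a fixed $|S|>0$ is needed.
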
	
This lemma can be inferred from Lemma 7.16 in \cite{GT}. Also see \cite{G,PS}. It is a version of the Poincar\'{e} inequality.

\section{Proof of Theorem \ref{th1.1}}
In this section we first design an approximation scheme for problem \eqref{sta1}-\eqref{sta2}. Then we obtain a weak solution by passing to the limit in our approximate problems.

Following \cite{LX}, we introduce a new unknown function
\begin{equation}
\psi=-\plap.
\end{equation}
Then regularize this equation by adding the term $\tau |u|^{p-2}u,\ \tau>0$, to its right-hand side.  This is due to the Neumann boundary condition in our problem. By the same reason, we add $\tau\psi$ to \eqref{sta1}.
This leads to the study of the system 
\begin{eqnarray}
-\des+\tau\psi+a u &=&f\ \ \ \mbox{in $\Omega$},\label{ot1}\\
-\plap +\tau|u|^{p-2} u &=&\psi \ \ \ \mbox{in $\Omega$}
\end{eqnarray}
coupled with the boundary conditions
\begin{equation}
\nabla u\cdot\nu=\nabla\es\cdot\nu=0\ \ \ \mbox{on $\partial\Omega$},\label{ot2}
\end{equation}
where we assume
\begin{equation}
f\in L^\infty(\Omega. \label{ot3}
\end{equation}  
This is our approximating problem.
Basically, we have transformed a fourth-order equation into a system of two second-order equations. A mathematical motivation behind this construction is given in \cite{LX}.

\begin{thm}\label{p21}
	Let $\Omega$ be a bounded domain in $\mathbb{R}^N$ with $C^{2,\alpha}$ boundary with some $\alpha\in(0,1)$, and assume that $1<p<N$ and \eqref{ot3} hold. Then there is a weak solution $(\psi, u)$ to \eqref{ot1}-\eqref{ot2} with 
	\begin{eqnarray}
	\psi&\in& W^{2,q}(\Omega)\ \ \textup{for each $q>1$},\label{reg1}\\
	u&\in& C^{1,\lambda}(\overline{\Omega}),\ \ \lambda\in (0,1).\label{reg2}
	\end{eqnarray}.
\end{thm}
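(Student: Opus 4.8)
\textbf{Proof strategy for Theorem \ref{p21}.} The plan is to apply the Leray--Schauder fixed point theorem in a suitable Banach space, using the decoupled structure of the system. The natural choice is to solve the two equations in sequence: given a function $u$, the second equation $-\plap + \tau|u|^{p-2}u = \psi$ is not an equation for $\psi$ at all but rather \emph{defines} $\psi$ from $u$; conversely, given $\psi$, the first equation $-\des + \tau\psi + au = f$ should be read as an elliptic equation for the quantity $\rho = \es$, which then determines $u$ through $\psi = \ln\rho$ and $-\plap = \psi$. So I would set up the fixed point map on the variable $u$. Concretely, fix $v \in C^{1,\lambda}(\overline{\Omega})$; set $\psi = -\Delta_p v + \tau|v|^{p-2}v$, which is merely a bounded measurable (indeed $W^{-1,p'}$-type) object, so this step alone is not enough — one must first regularize $v$, or better, build the map so that $\psi$ is produced as a genuine $W^{2,q}$ function. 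The cleaner route: given $v\in C^{1,\lambda}(\overline\Omega)$, let $\rho$ solve the linear Neumann problem $-\Delta\rho + a\,(\text{something in } v) = f - \tau\psi(v)$ — but $\psi(v)$ again is not regular.

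A more robust design, and the one I would actually carry out, is to iterate within the \emph{pair} $(\psi,u)$ but to exploit the smoothing in a fixed order. Given $w \in L^\infty(\Omega)$, first invoke Lemma \ref{l21} with $\tau$ in place of its $\tau$ and right-hand side $w \in L^q$ for all $q$: this produces a unique $u = u(w) \in W^{1,p}(\Omega) \cap L^\infty(\Omega)$ solving $-\plap + \tau|u|^{p-2}u = w$, with the bound $\|u\|_\infty \le c\|u\|_1 + c\|w\|_q^{1/(p-1)}$. By the $C^{1,\lambda}$ regularity theory for the $p$-Laplacian with bounded right-hand side (the references \cite{AZ, T} cited right after Lemma \ref{l21}), $u \in C^{1,\lambda}(\overline\Omega)$, giving \eqref{reg2}. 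Then set $\psi := w$, and consider the first equation as a \emph{linear} Neumann problem for $\rho = e^{\psi}$: namely $-\Delta\rho = f - \tau\psi - au(\psi)$ with $\nabla\rho\cdot\nu = 0$ on $\po$. Since the right-hand side is in $L^\infty(\Omega)$ (as $f\in L^\infty$, $\psi = w\in L^\infty$, $u\in L^\infty$), elliptic $L^q$ theory for the Neumann Laplacian gives $\rho \in W^{2,q}(\Omega)$ for every $q>1$, hence (Sobolev, with $N<\infty$) $\rho \in C^{1,\beta}(\overline\Omega)$ and in particular $\rho$ is continuous. The fixed point map is then $B(w) = \ln\rho$; a fixed point $w = \psi$ gives $\rho = e^\psi$, $\psi = -\plap$, and both equations hold, with the regularity \eqref{reg1}--\eqref{reg2}.

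To apply Leray--Schauder I must check (H1)--(H3) with $\mathcal B = C(\overline\Omega)$ (or $L^\infty$). Continuity (H1): if $w_n \to w$ uniformly, then $u(w_n)\to u(w)$ in $C^1$ by the stability of the $p$-Laplace solution operator (monotonicity gives uniqueness and continuous dependence, using Lemma \ref{plap}(ii) for $p\le 2$ and Lemma \ref{plap}(i) for $p\ge 2$), hence the data $f - \tau w_n - au(w_n) \to f - \tau w - au(w)$ in $L^q$, so $\rho_n \to \rho$ in $W^{2,q}$ and thus uniformly, and finally $\ln\rho_n \to \ln\rho$ uniformly provided $\rho$ is bounded away from $0$ — which requires an a priori lower bound $\rho \ge \delta_0 > 0$. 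Compactness (H2): the images land in a bounded set of $W^{2,q}$ for large $q$, which embeds compactly into $C(\overline\Omega)$. The a priori bound (H3): for $w = \sigma B(w)$, i.e. $\psi = \sigma\ln\rho$, I test the first equation by $\psi$ (equivalently use $-\plap$ as a test function, exactly the formal computation in the introduction preceding \eqref{ota20}) to get $\int|\nabla u|^p + \int|\nabla e^{-\psi/2}|^2 + \tau\int\psi^2 \le c\int|\nabla f|^p$ — wait, here $f\in L^\infty$ only, so instead I use Young's inequality $\int f\psi \le \tfrac{\tau}{2}\int\psi^2 + c\int f^2$, yielding a bound on $\|\psi\|_2$ and on $\|\nabla u\|_p$; combined with $\int\psi = \sigma\int\ln\rho$ controlled via \eqref{nm3}-type integration of the second equation (use $1$ as test function: $\tau\int|u|^{p-2}u = \int\psi$), and with the $L^\infty$ bound of Lemma \ref{l21}, one bootstraps: $u$ bounded in $L^\infty$ $\Rightarrow$ right side of the $\rho$-equation bounded in $L^2$ $\Rightarrow$ $\rho$ bounded in $W^{2,2}\hookrightarrow L^\infty$ (for $N\le 4$) $\Rightarrow$ ... iterate up to $W^{2,q}$ for all $q$. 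Crucially I also need a \emph{uniform positive lower bound for} $\rho$: this follows from the maximum principle applied to $-\Delta\rho = f - \tau\psi - au$, since the right-hand side is bounded, so $\rho \ge \min_{\overline\Omega}\rho$ can be estimated below by a constant depending only on the $L^\infty$ bound of that right-hand side and on $a$ — or, more directly, from the strong maximum principle since $\rho = e^\psi > 0$ pointwise and $\rho$ is continuous on the compact set $\overline\Omega$, once we know a priori that $\rho$ does not approach $0$; the clean way is to note $-\Delta\rho + a\,\rho\cdot\frac{u}{\rho}\cdots$ — more carefully, write the equation as $-\Delta\rho = h$ with $\|h\|_\infty \le C$ and apply the Neumann comparison principle against the constant subsolution, giving $\rho \ge c_0 > 0$ with $c_0$ depending only on $C$, $a$, $|\Omega|$.

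\textbf{Main obstacle.} The delicate point is \emph{not} the fixed-point machinery per se but keeping every estimate \emph{uniform in $\tau$-independent fashion within this theorem} (here $\tau$ is fixed, so that is fine) and, more substantively, securing the lower bound $\rho \ge c_0 > 0$ so that $\ln\rho$ is well-defined and the map $B$ lands in $C(\overline\Omega)$ with controlled norm; without it the composition $\ln\rho$ can blow down to $-\infty$. This is exactly the ``set where $-\plap$ is $+\infty$'' issue flagged in the introduction, but at the regularized level it is tamed because $\psi\in L^\infty$ along any fixed-point branch. The second genuinely nontrivial step is the $C^{1,\lambda}$ regularity and the continuous dependence of the $p$-Laplace solution operator on its right-hand side in $C^1$; for $1<p\le 2$ this uses Lemma \ref{plap}(ii) together with the standard DiBenedetto--Tolksdorf estimates (\cite{AZ,T}), and it is this point — together with the Sobolev embedding $W^{2,2}\hookrightarrow L^\infty$ needing $N\le 4$ — that dictates the hypotheses. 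Everything else is a routine bootstrap: bounded data in the Neumann Laplacian gives $W^{2,q}$ for all $q$, hence \eqref{reg1}, and bounded data in the $p$-Laplacian gives \eqref{reg2}.
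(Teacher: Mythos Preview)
Your fixed-point map has a genuine structural flaw. You propose, for given $w\in L^\infty$, to solve the pure Neumann problem $-\Delta\rho = f - \tau w - a\,u(w)$ with $\nabla\rho\cdot\nu=0$ and then set $B(w)=\ln\rho$. But this problem is solvable only under the compatibility condition $\int_\Omega\bigl(f-\tau w-au(w)\bigr)\,dx=0$, which fails for generic $w$, and even then the solution is determined only up to an additive constant. Worse, there is no mechanism forcing $\rho>0$: your appeal to ``the strong maximum principle since $\rho=e^\psi>0$'' is circular (along the iteration $\rho$ is defined as a Neumann solution, not as an exponential), and a ``Neumann comparison principle against a constant subsolution'' does not exist for $-\Delta\rho=h$ with sign-changing $h$. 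So $B$ is not well-defined as a map into $L^\infty(\Omega)$.

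The paper avoids both obstacles by a decisive rewriting: since $\Delta e^\psi=\mbox{div}(e^\psi\nabla\psi)$, one freezes the coefficient. Given $g\in L^\infty(\Omega)$, first solve $-\Delta_p u+\tau|u|^{p-2}u=g$ (as you do), and then solve the \emph{linear uniformly elliptic} problem $-\mbox{div}(e^g\nabla\psi)+\tau\psi=f-au$ with Neumann data, for $\psi$. Because $g\in L^\infty$, the coefficient $e^g$ is bounded above and below and the zeroth-order term $\tau\psi$ supplies coercivity, so existence and $C^{0,\beta}$ regularity are classical; no logarithm is taken and no positivity is needed. The a~priori bound (H3) then follows by testing $-\Delta e^\psi+\tau\psi=\sigma(f-au)$ with $|\psi|^{q-2}\psi$ (giving $\tau\|\psi\|_q\le\|f-au\|_q$), then with $e^\psi-1$, and invoking Lemma~\ref{l21}. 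No use of $W^{2,2}\hookrightarrow L^\infty$ is made, so your claim that $N\le 4$ ``dictates the hypotheses'' is misplaced: Theorem~\ref{p21} holds for all $N>p>1$.
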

\begin{proof}
	The existence assertion will be established via the Leray-Schauder Theorem. For this purpose, we define an operator $B$ from $L^\infty(\Omega)$
	into itself as follows: for each $g\in L^\infty(\Omega)$ we say $B(g)=\psi$ if $\psi$ is the unique solution of the linear boundary value problem
	\begin{eqnarray}
	-\mbox{div}\left(e^g\nabla\psi\right)+\tau\psi &=&f-au\ \ \ \mbox{in $\Omega$},\label{om3}\\
	\nabla\psi\cdot\nu&=&0\ \ \ \mbox{on $\partial\Omega$},\label{om4}
	\end{eqnarray}
	where $u$ solves the problem
	\begin{eqnarray}
	-\plap +\tau |u|^{p-2}u &=&g\ \ \ \mbox{in $\Omega$},\label{om5}\\
	\nabla u\cdot\nu&=& 0\ \ \ \mbox{on $\partial\Omega$}.\label{om6}
	\end{eqnarray}
	Concerning 	the preceding boundary value problem, a theorem in (\cite{O}, p.124) asserts that the problem  has a weak solution $u$ in the space $W^{1,p}(\Omega)$. Obviously, the uniqueness of such a solution is a consequence of Lemma \ref{plap}. In fact, we can further conclude from \cite{AZ,L,T} that
	$u$ satisfies \eqref{reg2}.
	Observe that since $g\in L^\infty(\Omega)$ the equation \eqref{om3} is uniformly elliptic. According to the classical regularity theory for linear elliptic equations,   problem \eqref{om3}-\eqref{om4} has a unique solution $\psi$ in the space $W^{1,2}(\Omega)\cap C^{0,\beta}(\overline{\Omega})$
	for some $\beta\in (0,1)$ (\cite{GT}, Chap. 8). 
	Therefore, we can conclude that $B$ is well-defined, continuous, and maps bounded sets into precompact ones. It remains to show that there is a positive number $c$ such that
	\begin{equation}
	\|\psi\|_\infty\leq c\label{ot8}
	\end{equation}
	for all $\psi\in L^\infty(\Omega)$ and $\sigma\in [0,1]$ satisfying
	$$\psi=\sigma B(\psi).$$
	This equation is equivalent to the boundary value problem
	\begin{eqnarray}
	-\des+\tau\psi &=&\sigma(f-au)\ \ \ \mbox{in $\Omega$},\label{ot9}\\
	-\plap +\tau|u|^{p-2} u &=&\psi \ \ \ \mbox{in $\Omega$},\label{ot10}\\
	\nabla u\cdot\nu=\nabla\es\cdot\nu&=& 0\ \ \ \mbox{on $\partial\Omega$}.
	\end{eqnarray}
	\begin{clm}We have
		\begin{eqnarray}
		\|\psi\|_{q}&\leq&\frac{1}{\tau}\|f-au\|_{q}\ \ \textup{for each $q>2$, and thus}\label{c21} \\
		\|\psi\|_\infty&\leq&\frac{1}{\tau}\|f-au\|_\infty.\label{ot12}
		\end{eqnarray}
		Furthermore,
		\begin{equation}\label{c22}
		\|\psi\|_2\leq\frac{1}{\tau}\|f\|_2
		\end{equation}
	\end{clm}
	\begin{proof} We just need to slightly modify the proof of Claim 2.1  in \cite{LX}. 
		Let $q>2$ be given. 
		Then the function $|\psi|^{q-2} \psi$ lies in $W^{1,2}(\Omega)$ and $\nabla\left(|\psi|^{q-2} \psi\right)=(q-1)|\psi|^{q-2}\nabla\psi$. Multiply through \eqref{ot9} by
		this function and integrate the resulting equation over $\Omega$ to obtain
		\begin{eqnarray*}
			(q-1)\int_\Omega\es|\psi|^{q-2}|\nabla\psi|^2\, dx
			+\tau\int_\Omega|\psi|^{q}\, dx&=&\sigma\int_\Omega(f-au)|\psi|^{q-2} \psi \, dx\\
			&\leq &\int_\Omega|f-au||\psi|^{q-1}\, dx\\
			&\leq &\|f-au\|_{q}\|\psi\|_{q}^{q-1}.
		\end{eqnarray*}
		Dropping the first integral in the above inequality yields \eqref{c21}.
		
		Multiplying $\psi$ through \eqref{ot9}, we obtain
		\begin{equation}
		\int_\Omega\es|\nabla\psi|^2\, dx
		+\tau\int_\Omega\psi^2\, dx
		=-\sigma\int_\Omega au\psi \, dx
		+\sigma\int_\Omega f\psi \, dx.\label{ot11}
		\end{equation}
		Upon  using $u$ as a test function in \eqref{ot10}, we can derive
		$$\int_\Omega u\psi \, dx
		=\int_\Omega|\nabla u|^p\, dx
		+\tau\int_\Omega u^p \, dx\geq 0.$$
		Keeping this in mind, we deduce from \eqref{ot11} that
		$$\tau\int_\Omega\psi^2\, dx
		\leq \sigma\int_\Omega f\psi \, dx
		\leq \int_\Omega |f||\psi| \, dx
		\leq\|f\|_2\|\psi\|_2.$$
		Then \eqref{c22} follows.
	\end{proof}
	
	To continue the proof of Theorem \ref{p21}, multiply through \eqref{ot9} by $e^\psi-1$ and integrate the resulting equation over $\Omega$ to obtain
	\begin{eqnarray}\label{ot122}
	\io|\nabla e^\psi|^2dx+\tau\io\psi(e^\psi-1)dx&=&-\sigma\io au(e^\psi-1)dx+\sigma\io f(e^\psi-1)dx\nonumber\\
	&\leq&\left| \io au(e^\psi-1)dx\right|+\|f\|_\infty\io|(e^\psi-1)|dx.
	\end{eqnarray}
	In view of \eqref{otn9}, the first integral on the right-hand side in the above equation can be estimated as follows:
	\begin{eqnarray}
	\left|\io u(e^\psi-1)dx\right|&\leq&\|u\|_2\|e^\psi-1\|_2\nonumber\\
	&\leq&c\varepsilon\|\nabla e^\psi\|_2+c\|e^\psi-1\|_1, \ \ \varepsilon>0.\label{ot13}
	\end{eqnarray}
	Here we have taken \eqref{c22} into account. For each $M>0$ we have
	\begin{eqnarray}
	\io\left|(e^\psi-1)\right|dx&=&\int_{|\psi|>M}\left|e^\psi-1\right|dx+\int_{|\psi|\leq M}\left|e^\psi-1\right|dx\nonumber\\
	&\leq&\frac{1}{M}\io\psi\left(e^\psi-1\right)dx+c(M).\label{ot14}
	\end{eqnarray}
	Plug \eqref{ot13} and then \eqref{ot14} into \eqref{ot122},  choose $\varepsilon$ suitably small and $M$ suitably large in the resulting inequality, thereby derive
	\begin{equation}
	\io|\nabla e^\psi|^2dx+\tau\io\psi(e^\psi-1)dx\leq c.
	\end{equation}
	This combined with \eqref{ot14} implies that $\psi$ is bounded in $L^q(\Omega)$ for each $q\geq 1$.
	Thus we apply Lemma \ref{l21} to obtain that $u$ is bounded in $L^\infty(\Omega)$. Consequently, \eqref{ot8} follows from \eqref{ot12} and \eqref{ot3}.
	As we mentioned earlier, we can infer \eqref{reg2} from \cite{L, AZ,T}. This together with the classical Calder\'{o}n-Zygmund estimate implies \eqref{reg1}.
	The proof is complete.
\end{proof}

\begin{proof}[Proof of Theorem \ref{th1.1}]
	Without loss of genelarity, we may assume that
	\begin{equation}
	f\in L^\infty(\Omega)\cap W^{1,p}(\Omega).
	\end{equation}
	Otherwise, $f$ can be approximated by a sequence in the above space in $W^{1,p}(\Omega)$.
	We shall show that we can take $\tau\rightarrow 0$ in \eqref{ot1}-\eqref{ot2}. For this purpose we need to derive estimates that are uniform in $\tau$. We write
	\begin{equation}
	u=\ut,\ \ \psi=\pst.
	\end{equation}
	Then problem \eqref{ot1}-\eqref{ot2} becomes
	\begin{eqnarray}
	-\Delta \rt+\tau\pst+a \ut &=&f\ \ \ \mbox{in $\Omega$},\label{ot1t}\\
	e^{\pst}&=&\rt\ \ \ \mbox{in $\Omega$},\label{ot4t}\\
	-\plapt+\tau|\ut|^{p-2} \ut &=&\pst \ \ \ \mbox{in $\Omega$},\label{ot3t}\\
	\nabla \ut=\cdot\nu&=&\nabla \rt\cdot\nu=0\ \ \ \mbox{on $\partial\Omega$}.\label{ot2t}
	\end{eqnarray}
	We also view $\{\ut,\rt,\pst\}$ as a sequence in the subsequent proof. Take $\tau=\frac{1}{k}$, where $k$ is a positive integer, for example. The rest of the proof is divided into several claims.
	
\end{proof}
\begin{clm}We have
	\begin{eqnarray}
	\io|\nabla\sqrt{\rt}|^2dx+\tau\io\pst^2dx+\io|\nabla\ut|^pdx+\tau\io\ut^pdx&\leq &c,\label{rue}\\
	\|\ut\|_{W^{1.p}(\Omega)}&\leq &c.\label{ue}
	\end{eqnarray}
\end{clm}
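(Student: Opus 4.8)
The plan is to test the two equations \eqref{ot1t} and \eqref{ot3t} against carefully chosen functions and add the resulting identities so that the bad terms cancel, mimicking the formal computation in Lemma \ref{lapri}. Concretely, I would first multiply \eqref{ot1t} by $\pst$ and integrate over $\Omega$; since $\rt=e^{\pst}\in W^{2,q}(\Omega)$ and $\pst\in W^{2,q}(\Omega)$ by Theorem \ref{p21}, all integrations by parts are legitimate. Using $-\io\Delta\rt\,\pst\,dx=\io\nabla\rt\cdot\nabla\pst\,dx=\io e^{\pst}|\nabla\pst|^2\,dx=4\io|\nabla\sqrt{\rt}|^2\,dx$, this yields
\begin{equation*}
4\io|\nabla\sqrt{\rt}|^2\,dx+\tau\io\pst^2\,dx+a\io\ut\pst\,dx=\io f\pst\,dx.
\end{equation*}
Next I would use $\ut$ as a test function in \eqref{ot3t} to get
\begin{equation*}
\io\ut\pst\,dx=\io|\nabla\ut|^p\,dx+\tau\io|\ut|^p\,dx,
\end{equation*}
so that the cross term $a\io\ut\pst\,dx$ becomes exactly $a\big(\io|\nabla\ut|^p\,dx+\tau\io|\ut|^p\,dx\big)$, which is a good term. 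Adding the two identities gives
\begin{equation*}
4\io|\nabla\sqrt{\rt}|^2\,dx+\tau\io\pst^2\,dx+a\io|\nabla\ut|^p\,dx+a\tau\io|\ut|^p\,dx=\io f\pst\,dx.
\end{equation*}

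The remaining task is to absorb the right-hand side $\io f\pst\,dx$. Here I would invoke the Claim proved inside Theorem \ref{p21}, namely $\|\pst\|_2\leq\frac{1}{\tau}\|f\|_2$ is \emph{not} useful (it blows up as $\tau\to0$), so instead I would split $\io f\pst\,dx$ differently. The natural move is to bound $\io f\pst\,dx\le\|f\|_\infty\io|\pst|\,dx$ and then control $\io|\pst|\,dx$ by a fraction of $\tau\io\pst^2\,dx$ plus a constant on the set $\{|\pst|\le M\}$, exactly as in \eqref{ot14}; but since $\tau\to0$ this still fails. The correct route, and the main obstacle, is to couple the two equations once more: test \eqref{ot1t} with $e^{\pst}-1=\rt-1$ and test \eqref{ot3t} with an appropriate power of $\pst$ or with $\psi$-dependent truncations, producing a bound of the form $\io|\nabla\rt|^2\,dx+\tau\io\pst(e^{\pst}-1)\,dx\le c$ uniformly in $\tau$ (this is the $\tau$-uniform analogue of the estimate at the end of the proof of Theorem \ref{p21}, where one uses \eqref{otn9} to handle $\|\ut\|_2\|\rt-1\|_2$ and the elementary splitting \eqref{ot14}). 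From $\tau\io\pst(e^{\pst}-1)\,dx\le c$ one extracts $\io|\pst|\,dx\le c$ via \eqref{ot14}, hence $\io f\pst\,dx\le c$, and then the displayed energy identity above gives \eqref{rue}. Finally, \eqref{ue} follows from \eqref{rue} since $\|\nabla\ut\|_p\le c$ together with \eqref{nm5}-type mass control (integrating \eqref{ot1t} over $\Omega$ gives $a\io\ut\,dx=\io f\,dx$, a uniform bound on $\io\ut\,dx$) and the Poincar\'e inequality of Lemma \ref{poin} yield a uniform bound on $\|\ut\|_{p^*}$ and hence on $\|\ut\|_{W^{1,p}(\Omega)}$.

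I expect the genuinely delicate point to be establishing the $\tau$-uniform bound $\tau\io\pst(e^{\pst}-1)\,dx+\io|\nabla\rt|^2\,dx\le c$: one must make sure that the estimate \eqref{ot13}, which in Theorem \ref{p21} relied on \eqref{c22} (a $\tau$-dependent bound), is replaced by an argument using only $\|\ut\|_2\le c$ from \eqref{ue}-in-progress (or from the mass bound plus Poincar\'e), so that the constants do not degenerate as $\tau\to0$. Once that circular-looking dependence between \eqref{rue} and \eqref{ue} is untangled — e.g.\ by first getting the mass bound $\io\ut\,dx$ and a preliminary $L^{p^*}$ bound on $\ut$ independent of the energy, then feeding it back — the rest is the routine test-function bookkeeping sketched above.
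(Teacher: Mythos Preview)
Your opening is exactly right and matches the paper: test \eqref{ot1t} with $\pst$, test \eqref{ot3t} with $\ut$, and combine to get
\[
4\io|\nabla\sqrt{\rt}|^2\,dx+\tau\io\pst^2\,dx+a\io|\nabla\ut|^p\,dx+a\tau\io|\ut|^p\,dx=\io f\pst\,dx.
\]
Your derivation of \eqref{ue} from \eqref{rue} via integrating \eqref{ot1t} and applying Poincar\'e is also the paper's argument.

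The gap is in how you propose to bound $\io f\pst\,dx$. You correctly diagnose that $\|f\|_\infty\io|\pst|\,dx$ cannot be closed using $\tau\io\pst^2\,dx$, but your fallback --- testing \eqref{ot1t} with $\rt-1$ to get a $\tau$-uniform bound on $\tau\io\pst(e^{\pst}-1)\,dx$ --- runs into the very circularity you flag. That estimate needs $\|\ut\|_2\le c$ on the right-hand side (via the term $a\io\ut(\rt-1)\,dx$), and the only route to $\|\ut\|_2\le c$ is Poincar\'e plus a bound on $\|\nabla\ut\|_p$, which is precisely what you are trying to prove. The mass bound $|\io\ut\,dx|\le c$ alone is not enough; you also need the gradient bound, so the loop does not close by ``first getting the mass bound.''

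The missing idea is much simpler and is where the hypothesis $f\in W^{1,p}(\Omega)$ enters: use $f$ itself as a test function in \eqref{ot3t}. This gives
\[
\io f\pst\,dx=\io|\nabla\ut|^{p-2}\nabla\ut\cdot\nabla f\,dx+\tau\io|\ut|^{p-2}\ut f\,dx\le\|\nabla f\|_p\|\nabla\ut\|_p^{\,p-1}+\tau\|f\|_p\|\ut\|_p^{\,p-1},
\]
and now Young's inequality absorbs both terms into the left-hand side of the energy identity, yielding \eqref{rue} directly with constants depending only on $\|f\|_{W^{1,p}}$. No detour through $\rt-1$ or $\io|\pst|\,dx$ is needed, and no circularity arises.
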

\begin{proof}
	Use $\pst=\ln\rt$ as a test function in \eqref{ot1t} to obtain
	\begin{equation}\label{nf1}
	4\io|\nabla\sqrt{\rt}|^2dx+\tau\io\pst^2dx+a\io\ut\pst dx=\io f\pst dx.
	\end{equation}
	With the aid of \eqref{ot3t}, we evaluate the last two integrals in the above equation as follows:
	\begin{eqnarray}
	\io\ut\pst dx&=&\io|\nabla\ut|^pdx+\tau\io\ut^pdx,\label{nf2}\\
	\io f\pst dx&=&\io|\nabla\ut|^{p-2}\nabla\ut\nabla fdx+\tau\io |\ut|^{p-2} \ut f dx\nonumber\\
	&\leq &\|\nabla f\|_p\|\nabla \ut\|_p^{p-1}+\tau\|f\|_p\|\ut\|_p^{p-1}.\label{nf3}
	\end{eqnarray}
	Plug \eqref{nf2} and \eqref{nf3} into \eqref{nf1}, apply the interpolation inequality (5) in Lemma \ref{elmen} in the resulting inequality, and thereby obtain
	\begin{eqnarray}\label{nf4}
	\lefteqn{\io|\nabla\sqrt{\rt}|^2dx+\tau\io\pst^2dx+\io|\nabla\ut|^pdx+\tau\io\ut^pdx}\nonumber\\&\leq &c\io|\nabla f|^pdx+c\tau\io| f|^pdx\nonumber\\
	&\leq &c\io|\nabla f|^pdx+c\io| f|^pdx.
	\end{eqnarray}
	From here on we assume that $\tau\leq 1$. The above estimate gives \eqref{rue}.
	Integrate \eqref{ot1t} over $\Omega$ to yield
	\begin{equation}
	\left|a\io \ut dx\right|=\left|\io fdx-\tau\io\pst dx\right|\leq c.
	\end{equation}
	Subsequently, we can apply the Poincar\'{e} inequality to get
	\begin{eqnarray}
	\|\ut\|_{p^*}&\leq &\|\ut-\frac{1}{|\Omega|}\io \ut dx\|_{p^*}+\frac{1}{|\Omega|^{1-\frac{1}{p}}}\left|\io \ut dx\right|\nonumber\\
	&\leq &c\|\nabla\ut\|_{p}+\frac{1}{|\Omega|^{1-\frac{1}{p}}}\left|\io \ut dx\right|\leq c.\label{nf5}
	\end{eqnarray}
	Thus \eqref{ue} follows. The proof is complete.
\end{proof}
\begin{clm}\label{aec}
	There exists a subsequence of $\{\rt\}$, still denoted by $\{\rt\}$, such that
	\begin{equation}
	\rt\rightarrow \rho\ \ \mbox{a.e. on $\Omega$ as $\tau\rightarrow 0$.}
	\end{equation}
\end{clm}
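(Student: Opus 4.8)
The plan is to deduce the a.e.\ convergence from compactness in $L^{1}(\Omega)$. Since the energy estimate \eqref{rue} controls only $\|\nabla\sqrt{\rt}\|_{2}$ and not $\sqrt{\rt}$ itself, the whole matter comes down to a uniform bound on the mean $\bar\rho_\tau:=\frac{1}{|\Omega|}\io\rt\,dx$. First I would rewrite \eqref{ot1t} as $-\Delta\rt=f-\tau\pst-a\ut=:F_{\tau}$ and observe that $F_{\tau}$ is bounded in $L^{m}(\Omega)$ with $m:=\min\{2,p^{*}\}>1$: indeed $\|\tau\pst\|_{2}=\sqrt{\tau}\,(\tau\io\pst^{2}dx)^{1/2}\le c\sqrt{\tau}\to 0$ by \eqref{rue}, $\ut$ is bounded in $L^{p^{*}}(\Omega)$ by \eqref{ue}, and $f\in L^{\infty}(\Omega)$, while $m>1$ uses $N\le 4$ and $p>1$. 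The $W^{2,m}$ estimate for the Neumann Laplacian then makes $\rt-\bar\rho_\tau$ bounded in $W^{2,m}(\Omega)$, hence in $L^{2}(\Omega)$ and relatively compact in $L^{1}(\Omega)$. Passing to a subsequence, $\rt-\bar\rho_\tau\to\eta$ in $L^{1}(\Omega)$ and a.e.; so if in addition $\bar\rho_\tau$ is bounded, then along a further subsequence $\bar\rho_\tau\to\bar\rho$ and $\rt\to\rho:=\bar\rho+\eta$ a.e., which is the claim.

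It therefore remains to bound $\bar\rho_\tau$. The lower bound is immediate: using the constant $1$ as a test function in \eqref{ot3t} together with the boundary condition \eqref{ot2t} gives $\io\pst\,dx=\tau\io|\ut|^{p-2}\ut\,dx$, which tends to $0$ by \eqref{ue}, so Jensen's inequality applied to $\rt=e^{\pst}$ yields $\bar\rho_\tau\ge\exp(\frac{1}{|\Omega|}\io\pst\,dx)\to 1$. The upper bound is the delicate point. I would write $\sqrt{\rt}=c_{\tau}+r_{\tau}$ with $c_{\tau}=\frac{1}{|\Omega|}\io\sqrt{\rt}\,dx$ and $r_{\tau}$ of mean zero; by Poincar\'{e}'s inequality and \eqref{rue}, $r_{\tau}$ is bounded in $W^{1,2}(\Omega)$, hence in $L^{4}(\Omega)$ (this is where $N\le 4$ enters). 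Then $\bar\rho_\tau=c_{\tau}^{2}+\frac{1}{|\Omega|}\io r_{\tau}^{2}dx$ and $\rt-\bar\rho_\tau=2c_{\tau}r_{\tau}+(r_{\tau}^{2}-\frac{1}{|\Omega|}\io r_{\tau}^{2}dx)$; comparing the last identity with the $L^{2}$ bound of the previous paragraph and using $\|r_{\tau}^{2}\|_{2}=\|r_{\tau}\|_{4}^{2}\le c$ gives $c_{\tau}\|r_{\tau}\|_{2}\le c$. Consequently a blow-up $\bar\rho_\tau\to\infty$ would force $c_{\tau}\to\infty$ and $\|r_{\tau}\|_{2}\to 0$, i.e.\ $\sqrt{\rt}=c_{\tau}+r_{\tau}\to\infty$ a.e.---exactly the forbidden ``convergence to infinity''.

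The main obstacle is thus to rule this scenario out, and I expect it to be the crux of the whole argument. The bounds \eqref{rue}--\eqref{ue} are by themselves consistent with $c_{\tau}\to\infty$, so one must bring equation \eqref{ot1t} itself into play, testing it against a truncation of $\pst$---something like $(\pst-M)^{+}$ or $e^{(\pst-M)^{+}}-1$ looks natural---and combine the resulting inequality with the smallness $\tau\io\pst^{2}dx\le c$ and the $W^{1,2}$ bound on $\sqrt{\rt}$ to show that the superlevel sets $\{\pst>M\}$ have measure tending to $0$ uniformly in $\tau$, so that $\bar\rho_\tau$ cannot escape to infinity. This is precisely the ``right combination'' of the energy identity \eqref{nm2} and the weak constraint \eqref{nm3} that the introduction warns must be found, and it is also where the set on which $\rt=e^{-\plapt}$ is large has to be controlled. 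Short of this step, the argument above still gives a.e.\ convergence, but only to a limit with values in $[0,+\infty]$.
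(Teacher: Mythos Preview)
You are proving more than the claim asks. Claim~\ref{aec} is a pure compactness statement: extract a subsequence along which $\rt$ converges a.e.\ on $\Omega$, with no assertion that the limit $\rho$ is finite. The paper makes this explicit in the remark immediately following its proof: ``at this point we cannot rule out the possibility that $\{\arctan\rt\}$ goes to $\frac{\pi}{2}$ on a large set. Thus the limit $\rho$ may not be finite a.e.\ on $\Omega$.'' Finiteness is deferred to Claims~\ref{fini} and~\ref{posi}. So the scenario you call ``forbidden'' and spend your last paragraph trying to exclude is not forbidden at all at this stage; your own sentence ``short of this step, the argument above still gives a.e.\ convergence, but only to a limit with values in $[0,+\infty]$'' already finishes the proof of the claim as stated.

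That said, your route to a.e.\ convergence is genuinely different from the paper's and considerably heavier. The paper simply tests \eqref{ot1t} with $\arctan\rt$, obtains
\[
\io\frac{|\nabla\rt|^{2}}{1+\rt^{2}}\,dx\le c,
\]
concludes that $\{\arctan\rt\}$ is bounded in $W^{1,2}(\Omega)$, extracts an a.e.\ convergent subsequence, and applies $\tan$. This uses nothing beyond \eqref{rue}--\eqref{ue} and works regardless of whether $\rt$ blows up. By contrast, you invoke the Calder\'on--Zygmund $W^{2,m}$ estimate for the Neumann Laplacian, decompose $\sqrt{\rt}$ into mean plus fluctuation, and track the mean separately in $[0,+\infty]$. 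Your argument is correct and even yields stronger information (compactness of $\rt-\bar\rho_\tau$ in $W^{2,m}$), but the paper's $\arctan$ trick gets the claim in three lines with no elliptic regularity and no case analysis on $\bar\rho_\tau$. The ideas you sketch in your final paragraph---testing against truncations to control the set where $\pst$ is large---are indeed close in spirit to what the paper does later in Claim~\ref{posi}, where the truncation $\gamma_{L}\bigl((\rt-1)^{+}\bigr)$ is used as a test function in both \eqref{ot1t} and \eqref{ns11}; but that belongs to the proof of positivity of $|\{\rho<\infty\}|$, not to Claim~\ref{aec}.
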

\begin{proof}We use $\arctan\rt$ as a test function in \eqref{ot1t} to obtain
	\begin{eqnarray}
	\io\frac{|\nabla\rt|^2}{1+\rt^2}dx
	&=&\io\left(f-\tau\pst-a\ut\right)\arctan\rt dx\nonumber\\
	&\leq &\pi\io\left|f-\tau\pst-a\ut\right|dx\leq c.
	\end{eqnarray}
	Thus $\{\arctan\rt\}$ is bounded in $W^{1,2}(\Omega)$. We can extract a subsequence of $\{\arctan\rt\}$ which converges a .e. on $\Omega$. It follows that $\rt=\tan\left(\arctan\rt\right)$ also converges a.e. along the subsequence. This completes the proof.
\end{proof}
It should be noted that at this point we cannot rule out the possibility that $\{\arctan\rt\}$  goes to $\frac{\pi}{2}$ on a large set.  Thus the limit $\rho$ may not be  finite a.e. on $\Omega$.  
\begin{clm}\label{fini}
	If $\rho$ is finite on a set of positive measure, then there is a subsequence of $\{\rt\}$ which is bounded in $L^q(\Omega)$ for each $1\leq q<\frac{N}{N-2}$.
\end{clm}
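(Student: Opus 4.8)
The plan is to combine the gradient bound $\io|\nabla\sqrt{\rt}|^2\,dx\le c$ from \eqref{rue} with the Poincar\'{e}-type inequality of Lemma \ref{poin}; the hypothesis that $\rho$ is finite on a positive-measure set will be used solely to produce an ``anchor'' set on which the relevant averages of $\sqrt{\rt}$ are controlled uniformly in $\tau$.

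First I would fix $M>0$ so large that $E_M:=\{x\in\Omega:\rho(x)\le M\}$ has positive measure, which is possible precisely because $\rho$ is finite on a set of positive measure. By Claim \ref{aec} the (relabeled) sequence $\rt$ converges a.e.\ on $\Omega$, hence a.e.\ on $E_M$ to the finite limit $\rho$, so Egorov's theorem furnishes a measurable $S\subset E_M$ with $|S|>0$ on which $\rt\to\rho$ uniformly. Hence there is $\tau_0>0$ with $\rt\le M+1$ on $S$ for all $\tau<\tau_0$. Since each $\rt$ is continuous on $\overline{\Omega}$ by \eqref{reg1}, the finitely many members with $\tau\ge\tau_0$ are individually bounded and are irrelevant to the $L^q$ boundedness of the sequence; I may therefore assume $\rt\le M+1$ on $S$ for every $\tau$ under consideration.

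Next I would apply Lemma \ref{poin} with $p=2$ (so $p^*=\frac{2N}{N-2}$) to $w=\sqrt{\rt}\in W^{1,2}(\Omega)$ and the set $S$, obtaining
\begin{equation*}
\left\|\sqrt{\rt}-\left(\sqrt{\rt}\right)_S\right\|_{\frac{2N}{N-2}}\le \frac{c\,d^{\,N+1-\frac{2}{N}}}{|S|^{1/2}}\left\|\nabla\sqrt{\rt}\right\|_2\le c,
\end{equation*}
where the last step is \eqref{rue}. Because $\left(\sqrt{\rt}\right)_S=\frac{1}{|S|}\int_S\sqrt{\rt}\,dx\le\sqrt{M+1}$, the triangle inequality gives $\|\sqrt{\rt}\|_{\frac{2N}{N-2}}\le c$, i.e.\ $\{\sqrt{\rt}\}$ is bounded in $L^{\frac{2N}{N-2}}(\Omega)$; squaring, $\{\rt\}$ is bounded in $L^{\frac{N}{N-2}}(\Omega)$ and a fortiori in $L^q(\Omega)$ for every $1\le q<\frac{N}{N-2}$. (For $N=2$ one replaces $2^*$ by an arbitrary finite exponent, in accordance with the convention stated in the Introduction.)

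The only delicate point is the construction of $S$: without the finiteness hypothesis on $\rho$ there is no reason for any fixed set of positive measure to carry a uniform bound on $\rt$ --- indeed the remark following Claim \ref{aec} warns that $\rt$ may blow up on a large set --- so the Egorov step is exactly where the hypothesis is consumed. The remaining steps are a direct application of Lemma \ref{poin} and the energy estimate \eqref{rue}, with no essential difficulty.
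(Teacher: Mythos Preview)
Your argument is correct, and it takes a genuinely different route from the paper's. Both proofs share the initial Egorov step to manufacture an anchor set of positive measure on which $\rt$ is uniformly bounded. After that point the two diverge. The paper tests \eqref{ot1t} with $\left(\frac{1}{s}-\frac{1}{\rt}\right)^{+}$ to obtain $\io\left|\nabla\ln^{+}\frac{\rt}{s}\right|^{2}dx\le \frac{c}{s}$, then applies Lemma~\ref{poin} to $\ln^{+}\frac{\rt}{s}$ (which vanishes on the anchor set) to get the level-set estimate $|\{\rt>2s\}|^{(N-2)/N}\le c/s$, i.e.\ a weak-$L^{N/(N-2)}$ bound and hence $L^{q}$ for $q<\frac{N}{N-2}$. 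You instead bypass the equation at this stage altogether and exploit the energy bound $\|\nabla\sqrt{\rt}\|_{2}\le c$ already recorded in \eqref{rue}, applying Lemma~\ref{poin} directly to $\sqrt{\rt}$ with the anchor set controlling the average. Your route is shorter, uses only information already on the table, and even yields the marginally stronger conclusion that $\{\rt\}$ is bounded in $L^{N/(N-2)}(\Omega)$ itself rather than merely in $L^{q}$ for $q<\frac{N}{N-2}$. The paper's level-set computation, on the other hand, is closer in spirit to De~Giorgi--Moser iteration and would be the natural template if one later wanted quantitative decay of the distribution function.
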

\begin{proof}
	Our assumption implies that there is a positive number $s_0$ such that the set
	\begin{equation}
	\Omega_{s_0}=\{x\in\Omega: \rho(x)\leq s_0\}
	\end{equation}
	has positive measure. According to Claim \ref{aec} and Egoroff's theorem, for each $\varepsilon>0$ there is a closed set $K\subset \Omega_{s_0}$ such that
	$| \Omega_{s_0}\setminus K|< \varepsilon$ and $\rt\rightarrow\rho$ uniformly on $K$. We take $\varepsilon=\frac{1}{2}|\Omega_{s_0}|$. Then the corresponding $K$ has positive measure. We easily conclude from the uniform convergence that there is a positive number
	$s_1>s_0$ with the property
	\begin{equation}
	\rt\leq s_1\ \ \mbox{on $K$.}
	\end{equation}
	For each $s>s_1$ we use $\left(\frac{1}{s}-\frac{1}{\rt}\right)^+$ as a test function in \eqref{ot1t} to obtain
	\begin{equation}
	\io\left|\nabla \ln^+\frac{\rt}{s}\right|^2dx=\io(f-\tau\pst-a\ut)\left(\frac{1}{s}-\frac{1}{\rt}\right)^+dx\leq \frac{c}{s}.
	\end{equation}
	Denote by $S_{\tau,s}$ the set where the function $\left(\frac{1}{s}-\frac{1}{\rt}\right)^+$  is $0$. It follows that
	\begin{equation}
	K\subset S_{\tau,s}\ \ \mbox{for all $s>s_1$ and sufficiently many $\tau>0$}.
	\end{equation}
	Thus we may apply Lemma \ref{poin} to obtain
	\begin{eqnarray}
	\ln^22|\{x\in\Omega:\rt(x)>2s\}|^{\frac{N-2}{N}}&\leq&\left[\io\left(\ln^+\frac{\rt}{s}\right)^{\frac{2N}{N-2}}dx\right]^{\frac{N-2}{N}}\nonumber\\
	&\leq&\frac{c}{|K|}\io\left|\nabla \ln^+\frac{\rt}{s}\right|^2dx\leq\frac{c}{s}.
	\end{eqnarray}
	This implies the desired result.
\end{proof}
\begin{clm}\label{posi}
	The set where $\rho$ is finite has positive measure.
\end{clm}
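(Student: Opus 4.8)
I would argue by contradiction: suppose the set $\{\rho<\infty\}$ has measure zero, so that along the subsequence of Claim \ref{aec} we have $\rt\to\rho=+\infty$ a.e. on $\Omega$, equivalently $\pst=\ln\rt\to+\infty$ a.e. Integrating \eqref{ot3t} over $\Omega$ and using the boundary condition gives the identity $\io\ln\rt\,dx=\tau\io|\ut|^{p-2}\ut\,dx$, and since $\tau\|\ut\|_p^{p-1}=\tau^{1/p}(\tau\|\ut\|_p^p)^{(p-1)/p}\to0$ by \eqref{rue}, we get $\io\ln\rt\,dx\to0$. Hence $\io(\ln\rt)^+dx=\io\ln\rt\,dx+\io(\ln\rt)^-dx$, so boundedness of $\io(\ln\rt)^+dx$ is equivalent to the estimate $\io(\ln\rt)^-dx\le c$. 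Once that is in hand, Fatou's lemma closes the argument: $(\ln\rt)^+\to+\infty$ a.e. forces $\liminf_\tau\io(\ln\rt)^+dx=+\infty$, contradicting the bound. So everything comes down to proving $\io(\ln\rt)^-dx\le c$ uniformly in $\tau$.

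To prove $\io(\ln\rt)^-dx\le c$, the idea is to test equation \eqref{ot3t} against $\xi=\arctan\rt-\tfrac\pi2$. This $\xi$ is bounded and, since $\rt\in W^{2,q}(\Omega)$ for every $q>1$, it lies in $W^{1,p}(\Omega)$, with $\nabla\xi=\nabla\rt/(1+\rt^2)$. Using $\pst=\ln\rt$ (cf.\ \eqref{ot4t}) one obtains
\[
\int_\Omega|\nabla\ut|^{p-2}\nabla\ut\cdot\frac{\nabla\rt}{1+\rt^2}\,dx+\tau\int_\Omega|\ut|^{p-2}\ut\Bigl(\arctan\rt-\tfrac\pi2\Bigr)\,dx=\int_\Omega\Bigl(\arctan\rt-\tfrac\pi2\Bigr)\ln\rt\,dx .
\]
The second term on the left is at most $c\,\tau\|\ut\|_p^{p-1}\to0$ by \eqref{rue}. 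For the first term, Hölder's inequality bounds it by $\|\nabla\ut\|_p^{p-1}\bigl(\int_\Omega|\nabla\rt|^p(1+\rt^2)^{-p}dx\bigr)^{1/p}$; here $\|\nabla\ut\|_p\le c$ by \eqref{ue}, and — this is precisely where $p\le2$ is used — writing $|\nabla\rt|^p(1+\rt^2)^{-p}=\bigl(|\nabla\rt|^2(1+\rt^2)^{-1}\bigr)^{p/2}(1+\rt^2)^{-p/2}$ and applying Hölder with exponents $2/p$ and $2/(2-p)$ shows this integral is $\le\bigl(\int_\Omega|\nabla\rt|^2(1+\rt^2)^{-1}dx\bigr)^{p/2}|\Omega|^{(2-p)/2}\le c$, the first factor being exactly the quantity already estimated in the proof of Claim \ref{aec}. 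Consequently $\bigl|\int_\Omega(\arctan\rt-\tfrac\pi2)\ln\rt\,dx\bigr|\le c$.

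It remains to convert this into a bound on $\io(\ln\rt)^-dx$ via two elementary pointwise facts about $t\mapsto(\tfrac\pi2-\arctan t)\ln t$ for $t>0$. For $0<t<1$ one has $\tfrac\pi2-\arctan t=\arctan\tfrac1t\ge\tfrac\pi4$, hence $(\arctan t-\tfrac\pi2)\ln t=(\tfrac\pi2-\arctan t)(\ln t)^-\ge\tfrac\pi4(\ln t)^-\ge0$; for $t\ge1$ one has $|(\arctan t-\tfrac\pi2)\ln t|=\arctan\tfrac1t\cdot\ln t\le\tfrac{\ln t}{t}\le e^{-1}$. Splitting $\int_\Omega(\arctan\rt-\tfrac\pi2)\ln\rt\,dx$ over $\{\rt<1\}$ and $\{\rt\ge1\}$ therefore yields $\tfrac\pi4\io(\ln\rt)^-dx\le c+e^{-1}|\Omega|\le c$, which is the missing estimate; combined with $\io\ln\rt\,dx\to0$ this gives $\io(\ln\rt)^+dx\le c$, and the Fatou contradiction completes the proof.

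I expect the only real obstacle to be spotting the right test function: one must hit the \emph{second} equation \eqref{ot3t} (not \eqref{ot1t}, whose $\pst$ term carries the vanishing factor $\tau$ and so cannot control $\pst$ uniformly) with the bounded, monotone function $\arctan\rt-\tfrac\pi2$ of $\rt$, whose gradient is tamed precisely by the $\arctan$-type energy bound from Claim \ref{aec}, and then carry out the $p\le2$ Hölder bookkeeping on the cross term; the rest is routine.
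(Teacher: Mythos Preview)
Your argument is correct and follows the same overall architecture as the paper's proof: argue by contradiction, test the second equation \eqref{ot3t} with a bounded function of $\rt$ whose gradient is controlled in a weighted $L^2$ sense, use $p\le 2$ together with H\"older to bound the cross term $\io|\nabla\ut|^{p-2}\nabla\ut\cdot\nabla\xi\,dx$, and close with Fatou. The difference is in the choice of test function. The paper takes $\xi=\gamma_L\bigl((\rt-1)^+\bigr)$ and obtains the needed gradient bound $\io|\nabla\xi|^2\,dx\le cL$ by testing \eqref{ot1t} separately, then observes $\ln\rt\,\gamma_L\bigl((\rt-1)^+\bigr)\to\infty$ a.e.\ to reach the contradiction. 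You instead take $\xi=\arctan\rt-\tfrac{\pi}{2}$ and recycle the weighted bound $\io|\nabla\rt|^2(1+\rt^2)^{-1}\,dx\le c$ already established in Claim~\ref{aec}, which makes the argument a bit more economical. A further dividend of your route is that the estimate $\io(\ln\rt)^-\,dx\le c$ you derive does not use the contradiction hypothesis at all; combined with $\bigl|\io\ln\rt\,dx\bigr|\le c\tau^{1/p}$ it gives the $L^1$ bound on $\ln\rt$ directly, i.e.\ it yields Claim~\ref{lnr} without first passing through the $L^1$ bound on $\rt$ from Claims~\ref{fini}--\ref{posi}. The paper's truncation approach, on the other hand, is slightly more self-contained at this stage since it does not rely on Claim~\ref{aec}.
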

\begin{proof}We argue by contradiction. Suppose that the claim is false. Then we have
	\begin{equation}
	\rho =\infty\ \ \mbox{a.e. on $\Omega$.}
	\end{equation}
	For each $L>0$ we define
	\begin{equation}\label{posi1}
	\gamma_L(s)=\left\{\begin{array}{ll}
	L& \mbox{if $s>L$,}\\
	s&\mbox{if $-L\leq s\leq L$,}\\
	-L&\mbox{if $s<-L$.}
	\end{array}\right.\end{equation}
	Fix $L>1$. 
	%
	Multiply through \eqref{ot1t} by $\gamma_L\left((\rt-1)^+\right)$ and integrate to obtain
	\begin{equation}
	\io\left|\nabla \gamma_L\left((\rt-1)^+\right)\right|^2dx=\io(f-\tau\pst-a\ut)\gamma_L\left((\rt-1)^+\right)dx\leq cL.
	\end{equation}
	Here we have used the fact that
	\begin{equation}
	\nabla \gamma_L\left((\rt-1)^+\right)=0\ \ \mbox{on the set where either $\rt\leq 1$ or $\rt>L+1$.}
	\end{equation}
	Now consider the sequence $\{\ln\rt\gamma_L\left((\rt-1)^+\right)\}$. It is easy to see that
	\begin{eqnarray}
	\ln\rt\gamma_L\left((\rt-1)^+\right)&\geq&0 \ \ \mbox{a.e. on $\Omega$,}\\
	\lim_{\tau\rightarrow 0}\ln\rt\gamma_L\left((\rt-1)^+\right)&=&\infty\ \ \mbox{a.e. on $\Omega$.}
	\end{eqnarray}
	By \eqref{ot4t} and \eqref{ot3t}, we have
	\begin{equation}\label{ns11}
	-\plapt+\tau|\ut|^{p-2} \ut =\ln\rt \ \ \ \mbox{in $\Omega$}.
	\end{equation}
	Use $\gamma_L\left((\rt-1)^+\right)$ as a test function in the above equation, thereby deriving
	\begin{eqnarray}
	\lefteqn{\io\ln\rt\gamma_L\left((\rt-1)^+\right)dx}\nonumber\\
	&=&\io|\nabla\ut|^{p-2}\nabla\ut\nabla\gamma_L\left((\rt-1)^+\right)dx\nonumber\\
	&&+\tau\io|\ut|^{p-2} \ut \gamma_L\left((\rt-1)^+\right) dx\nonumber\\
	&\leq &\|\nabla\gamma_L\left((\rt-1)^+\right)\|_p\|\nabla\ut\|_p^{p-1}+L\tau\io|\ut|^{p-1}dx\leq c(L).\label{pl2}
	\end{eqnarray}
	The last step is due to the assumption $p\leq 2$.
	It follows from Fatou's lemma that the left-hand side of the above inequality goes to $\infty$ as $\tau\rightarrow 0$. This gives us a contradiction. The proof is complete.
\end{proof}
We would like to make a remark about 
the condition $p\leq 2$. Note from \eqref{c21} that
\begin{equation}\label{c211}
\tau\|\pst\|_{p^*}\leq\|f-a\ut\|_{p^*}.
\end{equation}
Thus this condition
could be avoided here if we had the estimate
\begin{equation}\label{wpe1}
\|\nabla\rt\|_p\leq c\|f-\tau\pst-a\ut\|_{\frac{Np}{N+p}}.
\end{equation}
The above inequality is valid if $\rt$ satisfies the Dirichlet boundary condition $\rt|_{\po}=0$. 
In our case, the right hand side of \eqref{wpe1} seems to also depend on the $L^1$-norm of $\rt$.
\begin{clm}\label{lnr}
	The sequence $\{\ln\rt\}$ is bounded in $L^1(\Omega)$.
\end{clm}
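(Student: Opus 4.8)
The plan is to control the positive and negative parts of $\ln\rt$ separately, writing $\pst=\ln\rt=\pst^+-\pst^-$ with $\pst^\pm\ge 0$. For the positive part, the elementary inequality $\ln s\le s$ (which holds for all $s>0$, in particular on the set $\{\rt\ge 1\}$) gives
\begin{equation*}
\io\pst^+\,dx=\int_{\{\rt\ge 1\}}\ln\rt\,dx\le\int_{\{\rt\ge 1\}}\rt\,dx\le\|\rt\|_1 .
\end{equation*}
By Claim \ref{posi} the hypothesis of Claim \ref{fini} is satisfied, so along the subsequence produced there $\{\rt\}$ is bounded in $L^q(\Omega)$ for every $q<\frac{N}{N-2}$; since $N\le 4$ this range contains $q=1$, whence $\|\rt\|_1\le c$ and therefore $\io\pst^+\,dx\le c$.

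For the negative part I would use the global identity coming from the second equation of the system. Combining \eqref{ot4t} with \eqref{ot3t} yields \eqref{ns11}, that is $-\plapt+\tau|\ut|^{p-2}\ut=\ln\rt$ in $\Omega$ with $\nabla\ut\cdot\nu=0$ on $\po$. Integrating this equation over $\Omega$, and using that $\io\plapt\,dx=\int_{\po}|\nabla\ut|^{p-2}\nabla\ut\cdot\nu\,dS=0$, we obtain
\begin{equation*}
\io\ln\rt\,dx=\tau\io|\ut|^{p-2}\ut\,dx .
\end{equation*}
By \eqref{ue}, $\ut$ is bounded in $W^{1,p}(\Omega)\hookrightarrow L^{p^*}(\Omega)$, and since $p-1<p^*$ one has $\tau\io|\ut|^{p-1}\,dx\le c\tau\le c$; hence $\left|\io\ln\rt\,dx\right|\le c$. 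Consequently
\begin{equation*}
\io\pst^-\,dx=\io\pst^+\,dx-\io\pst\,dx\le c ,
\end{equation*}
and therefore $\|\ln\rt\|_1=\io\pst^+\,dx+\io\pst^-\,dx\le c$, which is the assertion. (Note that $|\ut|^{p-2}\ut$ is integrable for fixed $\tau$ because of the regularity \eqref{reg2}, and that $\tau\le 1$ has already been assumed.)

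The one genuinely nontrivial point is that the basic a priori estimate \eqref{rue} provides no bound on $\pst$ that survives the passage $\tau\to 0$ --- it gives only $\tau\io\pst^2\,dx\le c$ --- and $\ln\rt$ can be strongly negative precisely on the (possibly large) set where $\rt\to 0$. Thus the negative part cannot be read off directly and must be recovered indirectly, via the $\ut$-equation together with the $L^1$-bound on $\rt$, which is itself the hard-won content of Claims \ref{posi}--\ref{fini}; once these are available the estimate is immediate.
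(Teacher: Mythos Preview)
Your proof is correct and follows essentially the same approach as the paper: bound $\io\ln^+\rt\,dx$ by $\|\rt\|_1$ via Claims \ref{posi}--\ref{fini}, control $\io\ln\rt\,dx$ by integrating \eqref{ns11} over $\Omega$, and combine the two to bound the negative part. The only cosmetic difference is that the paper uses \eqref{nf4} to get the slightly weaker bound $\left|\io\ln\rt\,dx\right|\le c\tau^{1/p}$, whereas you use \eqref{ue} to obtain $c\tau$; also, your remark that $N\le 4$ is needed for $q=1$ to lie in $[1,\tfrac{N}{N-2})$ is unnecessary, since $1<\tfrac{N}{N-2}$ holds for every $N>2$.
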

\begin{proof} 
	Use the number $1$ as a test function in \eqref{ns11} to get
	\begin{equation}\label{nf6}
	\left|\io \ln\rt dx\right|=\tau\left|\io |\ut|^{p-2} \ut dx \right|\leq c\tau^{\frac{1}{p}}.
	\end{equation}
	The last step is due to \eqref{nf4}. By virtue of Claims \ref{fini} and \ref{posi}, 
	\begin{equation}\label{rb1}
	\mbox{the sequence $\{\rt\}$ is bounded in $L^q(\Omega)
		$ for each $1\leq q<\frac{N}{N-2}$. }
	\end{equation}
	We will use this for $q=1$. Aslo keeping  \eqref{nf6} in mind, we estimate
	\begin{eqnarray}
	\io|\ln\rt|dx&=&\io\ln^+\rt dx+\io\ln^-\rt dx\nonumber\\
	&=&2\io\ln^+\rt dx-\io\ln\rt dx\nonumber\\
	&\leq& 2\io\rt dx+c\tau^{\frac{1}{p}}\leq c.
	\end{eqnarray}\end{proof}
\begin{clm}\label{rtc}
	The sequence $\{\rt\}$ is precompact in $W^{1,2}(\Omega)$.
\end{clm}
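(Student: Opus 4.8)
The plan is to upgrade the weak convergence of $\{\rt\}$ in $W^{1,2}(\Omega)$ to strong convergence by testing the difference of the equations \eqref{ot1t} for two indices of the sequence with $\rt-\rho$ (or, more carefully, with a truncation thereof, since at this stage $\rho$ is only known to be finite a.e.\ and $L^q$ for $q<\frac{N}{N-2}$, not $W^{1,2}$). First I would record what is already available: by \eqref{rue} the sequence $\{\nabla\sqrt{\rt}\}$ is bounded in $L^2(\Omega)$, by Claim~\ref{lnr} the sequence $\{\ln\rt\}$ is bounded in $L^1(\Omega)$, and by \eqref{rb1} the sequence $\{\rt\}$ is bounded in $L^q(\Omega)$ for every $q<\frac{N}{N-2}$; since $N\le 4$ this range contains an exponent $>1$, so $\{\rt\}$ is uniformly integrable and (along a further subsequence) converges weakly in $L^q$ and strongly in $L^1(\Omega)$, with a.e.\ limit $\rho$ by Claim~\ref{aec}. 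The key first step is to promote boundedness of $\{\rt\}$ to boundedness of $\{\nabla\rt\}$ in $L^2(\Omega)$: test \eqref{ot1t} with $\rt$ itself to get
\begin{equation*}
\io|\nabla\rt|^2\,dx=\io(f-\tau\pst-a\ut)\rt\,dx+\tau\io\pst\rt\,dx-\tau\io\pst\rt\,dx,
\end{equation*}
so that $\io|\nabla\rt|^2\,dx=\io(f-\tau\pst-a\ut)\rt\,dx$; the right-hand side is controlled once we know $\{\rt\}$ is bounded in $L^2(\Omega)$ and $\{\tau\pst\}$ is bounded in, say, $L^{2}$, which follows from $\tau\|\pst\|_2\le\|f\|_2$ via \eqref{c22}. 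Here I would use $N\le 4$ and \eqref{rb1} (which gives $\{\rt\}$ bounded in $L^q$ for $q<\frac{N}{N-2}\le 2$ when $N=4$, and for a larger range when $N<4$) together with the Sobolev/interpolation machinery of Lemma~\ref{linterp} to bootstrap: boundedness of $\nabla\rt$ in $L^2$ plus boundedness of $\rt$ in $L^1$ gives, by \eqref{otn9}, boundedness in $L^{2^*}$, which for $N\le 4$ gives boundedness in $L^2$, closing the loop. Thus $\{\rt\}$ is bounded in $W^{1,2}(\Omega)$ and hence weakly convergent to $\rho\in W^{1,2}(\Omega)$ along a subsequence.

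For strong convergence, write the equations \eqref{ot1t} for two indices $\tau,\tau'$ and subtract:
\begin{equation*}
-\Delta(\rt-\rho_{\tau'})=(f-\tau\pst-a\ut)-(f-\tau'\psi_{\tau'}-a u_{\tau'})=:F_{\tau,\tau'}.
\end{equation*}
Testing with $\rt-\rho_{\tau'}$ yields
\begin{equation*}
\io|\nabla(\rt-\rho_{\tau'})|^2\,dx=\io F_{\tau,\tau'}(\rt-\rho_{\tau'})\,dx.
\end{equation*}
Since $\{\tau\pst\}$ is bounded in $L^2$ and converges to $0$ strongly in $L^2$ (as $\tau\|\pst\|_2\le\|f\|_2$ forces $\|\tau\pst\|_2\le\sqrt\tau\,\|f\|_2^{1/2}\cdot(\tau^{1/2}\|\pst\|_2^{1/2})$—more simply, $\tau\pst=\tau^{1/2}\cdot(\tau^{1/2}\pst)$ and $\tau^{1/2}\pst$ is $L^2$-bounded by \eqref{rue}), and $\{a\ut\}$ converges strongly in $L^2(\Omega)$ by the compact embedding $W^{1,p}(\Omega)\hookrightarrow L^2(\Omega)$ (valid for $N\le 4$, $1<p\le2$, using \eqref{ue}), the sequence $\{F_{\tau,\tau'}\}$ is Cauchy in $L^{(2^*)'}(\Omega)$; pairing against $\{\rt-\rho_{\tau'}\}$, which is bounded in $L^{2^*}(\Omega)$, shows the right-hand side tends to $0$ as $\tau,\tau'\to0$. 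Hence $\{\nabla\rt\}$ is Cauchy in $L^2(\Omega)$, and combined with $L^2$-convergence of $\{\rt\}$ this gives the claimed precompactness in $W^{1,2}(\Omega)$.

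The main obstacle, and the point requiring the most care, is the very first bootstrap: a priori \eqref{rue} only controls $\nabla\sqrt{\rt}$, not $\nabla\rt$, and the passage from $\|\nabla\sqrt{\rt}\|_2$ plus an $L^1$ bound on $\ln\rt$ to an honest $L^2$ (let alone $W^{1,2}$) bound on $\rt$ is exactly where the dimensional restriction $N\le4$ is consumed. One must check that the exponent $q<\frac{N}{N-2}$ delivered by Claim~\ref{fini} is large enough to run the Sobolev interpolation in Lemma~\ref{linterp}(2): when $N=4$ this is the borderline case $q<2$, and one should test \eqref{ot1t} with a truncation $\gamma_L(\rt)$ (cf.\ the argument in Claim~\ref{posi}) and let $L\to\infty$ with uniform-in-$L$ bounds, rather than testing directly with the possibly-unbounded $\rt$, to make the estimate rigorous; the uniform integrability from \eqref{rb1} with $q>1$ is what lets one pass to the limit in $L$. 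Once $W^{1,2}$-boundedness is secured, the Cauchy argument above is routine, since every term on the right-hand side of the subtracted equation either carries a factor of $\tau$ or converges strongly by compact Sobolev embedding.
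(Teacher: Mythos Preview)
Your overall strategy matches the paper's: test \eqref{ot1t} with (essentially) $\rt$ to get $W^{1,2}$-boundedness, then subtract the equations for two indices and test with the difference to get a Cauchy estimate on $\nabla\rt$. However, there is one concrete error. The compact embedding $W^{1,p}(\Omega)\hookrightarrow L^2(\Omega)$ you invoke is \emph{not} valid for all $N\le 4$, $1<p\le 2$: when $N=4$ and $1<p\le\frac{4}{3}$ one has $p^*=\frac{Np}{N-p}\le 2$, so $\ut$ need not even lie in $L^2(\Omega)$. This affects both your boundedness step (where you want $f-\tau\pst-a\ut\in L^2$) and your Cauchy step (where you want $a\ut$ strongly convergent in $L^2$). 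The fix, which is exactly what the paper does, is to pair at the exponents $\bigl((2^*)',2^*\bigr)=\bigl(\tfrac{2N}{N+2},\tfrac{2N}{N-2}\bigr)$: the key observation \eqref{nss2}, namely $\frac{2N}{N+2}<p^*$, holds for all $N\le 4$ and $p>1$, so $f-a\ut$ is bounded in $L^{(2^*)'}$ and $W^{1,p}\hookrightarrow L^{(2^*)'}$ compactly. Since you already name $L^{(2^*)'}$ as the target space in the Cauchy step, the repair is immediate; just drop the intermediate $L^2$ claims.

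Two smaller points of comparison. First, the paper tests with $\rt-1$ rather than $\rt$, which lets one drop the term $\tau\pst(\rt-1)=\tau\ln\rt\,(\rt-1)\ge 0$ by sign and avoids tracking $\tau\pst$ in the boundedness step; your route via $\|\tau\pst\|_2\le c\tau^{1/2}$ also works but is slightly heavier. Second, your ``closing the loop'' description of the $W^{1,2}$-bound is circular as written; the non-circular form (which is what the paper does) is simply
\[
\|\nabla\rt\|_2^2\le \|f-a\ut\|_{\frac{2N}{N+2}}\|\rt-1\|_{\frac{2N}{N-2}}\le c\bigl(\|\nabla\rt\|_2+\|\rt-1\|_1\bigr)
\]
via Sobolev and Lemma~\ref{linterp}(2), after which one absorbs. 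Finally, for the Cauchy step the paper takes the ``dual'' route to yours: it keeps $g_\tau=f-a\ut-\tau\pst$ merely bounded in $L^{p^*}$ (using \eqref{c211} for the $\tau\pst$ piece) and instead uses that $\{\rt\}$ is precompact in $L^{(p^*)'}$, which is $<2^*$. Your version---$F_{\tau,\tau'}$ Cauchy in $L^{(2^*)'}$ paired against $\rt-\rho_{\tau'}$ bounded in $L^{2^*}$---is an equally valid variant once the embedding issue above is corrected.
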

\begin{proof} Note that our assumptions on $N, p$ imply
	\begin{equation}\label{nss2}
	\frac{2N}{N+2}<\frac{Np}{N-p}=p^*.
	\end{equation}Use $\rt-1$ as a test function in \eqref{ot1t} to obtain
	\begin{eqnarray}
	\io|\nabla\rt|^2dx&\leq &\io(f-a\ut)(\rt-1)dx\nonumber\\
	&\leq&\|f-a\ut\|_{\frac{2N}{N+2}}\|\rt-1\|_{\frac{2N}{N-2}}\nonumber\\
	&\leq&c\|\nabla\rt\|_2+c\|\rt-1\|_2.\label{nss1}
	\end{eqnarray}
	Here we have used the fact that $\pst(\rt-1)\geq 0$. 
	Use \eqref{rb1} and the interpolation inequality \eqref{otn9} in \eqref{nss1} to obtain
	\begin{equation}
	\io|\nabla\rt|^2dx\leq c.
	\end{equation}
	This combined with \eqref{rb1} implies that
	\begin{equation}\label{rtb11}
	\mbox{$\{\rt\}$ is bounded in $W^{1,2}(\Omega)$.}
	\end{equation}
	Thus $\{\rt\}$ is precompact in $L^q(\Omega)$ for each $q\in [0, 2^*)$.
	Set $g_\tau= f-a\ut-\tau\pst$.
	Then by \eqref{c211}, the sequence  $\{g_\tau\}$ is  bounded in $L^{p^*}(\Omega)$.
	Let $\tau_1,\tau_2\in (0,1)$. We calculate from \eqref{ot1t} that
	\begin{eqnarray}
	\io|\nabla\rho_{\tau_1}-\nabla\rho_{\tau_2}|^2dx&\leq& \io(g_{\tau_1}-g_{\tau_2})(\rho_{\tau_1}-\rho_{\tau_2})dx\nonumber\\
	&\leq& c \|g_{\tau_1}-g_{\tau_2}\|_{p^*} \|\rho_{\tau_1}-\rho_{\tau_2}\|_{\frac{Np}{Np-N+p}}\nonumber\\
	&\leq& c\|\rho_{\tau_1}-\rho_{\tau_2}\|_{\frac{Np}{Np-N+p}}.
	\end{eqnarray}
	In view of our assumptions on $N, p$, we have  that
	\begin{equation}
	\frac{Np}{Np-N+p}<2^*.
	\end{equation}
	The claim follows from the precompactness of $\{\rt\}$ in $L^q(\Omega)$ for each $q\in [0, 2^*)$.
\end{proof}
\begin{clm}\label{gup} At least a  subsequence of $\{\nabla\ut\}$ converges a.e. on $\Omega$.
\end{clm}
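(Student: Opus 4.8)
The plan is to prove Claim \ref{gup} via the classical monotonicity-type argument for the $p$-Laplacian, using the almost-everywhere convergence of $\{\rt\}$ already secured in Claim \ref{rtc} together with the special structure of equation \eqref{ot3t}. First I would recall that by Claim \ref{rtc} we have (up to a subsequence) $\rt\to\rho$ strongly in $W^{1,2}(\Omega)$ and hence $\pst=\ln\rt\to\ln\rho$ in an appropriate sense on the set $\{\rho>0\}$; combined with Claim \ref{lnr} and the $L^q$ bounds \eqref{rb1}, the right-hand side $\pst-\tau|\ut|^{p-2}\ut$ of \eqref{ns11} is controlled in $L^1$ and in fact converges in $L^1(\Omega)$ along the subsequence. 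Since $\{\ut\}$ is bounded in $W^{1,p}(\Omega)$ by \eqref{ue}, after passing to a further subsequence we may assume $\ut\rightharpoonup u$ weakly in $W^{1,p}(\Omega)$, $\ut\to u$ strongly in $L^p(\Omega)$ and a.e., and $|\nabla\ut|^{p-2}\nabla\ut\rightharpoonup \chi$ weakly in $L^{p/(p-1)}(\Omega)$ for some vector field $\chi$.

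Next I would test the difference of two copies of \eqref{ns11}, or more cleanly test \eqref{ns11} with $\ut-u$ (or with $\gamma_L((\ut-u))$ if a truncation is needed to stay inside $W^{1,p}$ and handle the $L^1$ right-hand side), giving
\[
\io|\nabla\ut|^{p-2}\nabla\ut\cdot\nabla(\ut-u)\,dx
=\io\bigl(\pst-\tau|\ut|^{p-2}\ut\bigr)(\ut-u)\,dx-\tau\io|\ut|^{p-2}\ut(\ut-u)\,dx.
\]
Because $\ut-u\to 0$ strongly in $L^p$ and the right-hand factor is controlled (in $L^{p'}$ on the region where $\pst$ is bounded, and via the truncation argument together with the $L^1$ bound of Claim \ref{lnr} elsewhere — this is exactly where one exploits that $\gamma_L$ is bounded and that $\{\ln\rt\}$ is bounded in $L^1$), the right-hand side tends to $0$. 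Subtracting $\io|\nabla u|^{p-2}\nabla u\cdot\nabla(\ut-u)\,dx$, which tends to $0$ by weak convergence, yields
\[
\limsup_{\tau\to 0}\io\bigl(|\nabla\ut|^{p-2}\nabla\ut-|\nabla u|^{p-2}\nabla u\bigr)\cdot\nabla(\ut-u)\,dx\leq 0.
\]
Since the integrand is nonnegative by Lemma \ref{plap}(ii) (this is precisely the inequality flagged in the paper as failing at $p=1$), and using Lemma \ref{plap}(ii) with the weight $(1+|\nabla\ut|^2+|\nabla u|^2)^{(2-p)/2}$ controlled by the $W^{1,p}$ bound \eqref{ue} via Hölder, one deduces that $\nabla\ut\to\nabla u$ in measure, and hence a subsequence converges a.e. on $\Omega$.

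The main obstacle will be the right-hand side of \eqref{ns11}: unlike the usual $p$-Laplacian compactness argument, here $\psi=\ln\rho$ is only known to be $L^1$ (Claim \ref{lnr}) and could blow up on the set where $\rho=\infty$ (which, by Claim \ref{posi}, is only known to have measure strictly less than $|\Omega|$, not zero). So the test function $\ut-u$ must be replaced by a truncated version, and one must carefully split $\Omega$ into the region where $\pst$ is bounded (where Hölder with $\{\ut\}$ bounded in $L^{p^*}$ applies) and the region where it is large (where one uses that $\gamma_L$ is bounded by $L$, that $\ut-u\to 0$ in $L^1$, and the $L^1$ bound on $\ln\rt$, letting $L\to\infty$ after $\tau\to 0$). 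Making this two-parameter limit rigorous — controlling the truncation error uniformly in $\tau$ — is the delicate point; everything else is the standard Minty–Browder monotonicity machinery combined with Lemma \ref{plap}(ii).
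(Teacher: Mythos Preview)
Your overall strategy—test the equation against the difference and invoke Lemma \ref{plap}(ii)—is the same as the paper's, but the paper handles the $L^1$ right-hand side with a different, more robust device, and your write-up has a genuine gap at precisely the point you flag as ``the delicate point.''

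You assert that $\pst-\tau|\ut|^{p-2}\ut$ \emph{converges in $L^1(\Omega)$}. This is not established: the positive part $\ln^+\rt$ is equi-integrable (it is dominated by $\rt$, bounded in $L^q$ for $q<N/(N-2)$ by \eqref{rb1}), but nothing in the paper controls $\ln^-\rt$ beyond the raw $L^1$ bound of Claim \ref{lnr}. Without equi-integrability of $\{\ln^-\rt\}$, neither the $L^1$ convergence you claim nor the two-parameter limit (first $\tau\to 0$, then $L\to\infty$) can be closed: on the set $\{|\pst|>M\}$ the contribution to $\io\pst\,\gamma_L(\ut-u)\,dx$ is only bounded by $L\sup_\tau\int_{\{|\pst|>M\}}|\pst|\,dx$, which need not tend to $0$ as $M\to\infty$. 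Your splitting by level sets of $\pst$ therefore does not produce smallness uniformly in $\tau$.

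The paper sidesteps this with two twists. First, it runs a Cauchy argument comparing $u_{\tau_1}$ with $u_{\tau_2}$ rather than $\ut$ with the weak limit $u$. Second—and this is the key idea—the truncation level is taken \emph{small}, not large: one tests with $\gamma_\varepsilon(u_{\tau_1}-u_{\tau_2})$. Egoroff's theorem provides a fixed set $E$ with $|\Omega\setminus E|\leq\delta$ on which $\ut$ converges uniformly, so for small $\tau_1,\tau_2$ one has $|u_{\tau_1}-u_{\tau_2}|<\varepsilon$ on $E$ and hence $\gamma_\varepsilon(u_{\tau_1}-u_{\tau_2})=u_{\tau_1}-u_{\tau_2}$ there. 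Globally $|\gamma_\varepsilon|\leq\varepsilon$, so the right-hand side is bounded by $c\varepsilon$ using \emph{only} the $L^1$ bound from Claim \ref{lnr}—no equi-integrability is needed. This yields $\int_E\bigl(|\nabla u_{\tau_1}|^{p-2}\nabla u_{\tau_1}-|\nabla u_{\tau_2}|^{p-2}\nabla u_{\tau_2}\bigr)\cdot\nabla(u_{\tau_1}-u_{\tau_2})\,dx\leq c\varepsilon$; Lemma \ref{plap}(ii) together with H\"older (to absorb the weight $(1+|\nabla u_{\tau_1}|^2+|\nabla u_{\tau_2}|^2)^{(2-p)/2}$ via the $W^{1,p}$ bound \eqref{ue}) then gives Cauchy-ness of $\{\nabla\ut\}$ in $(L^q(E))^N$ for $q<p$, and finally one lets $\delta\to 0$.
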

\begin{proof}
	We will show that $\{\ut\}$ is precompact in $W^{1,q}(\Omega)$ for each $q<p$. The idea behind the proof has appeared elsewhere. See, for example,  the proof of Lemma 2.2 in \cite{X4}.
	
	By \eqref{ue} and Egoroff's theorem, for each $\delta>0$ there is a closed set $E\subset \Omega$ with the properties
	\begin{eqnarray}
	|\Omega\setminus E|&\leq&\delta,\\
	\ut&\rightarrow& u\ \ \ \mbox{uniformly on $E$.}
	\end{eqnarray}
	Subsequently, we can find a positive number $K$ so that
	\begin{equation}\label{utb1}
	|\ut|\leq K \ \ \ \mbox{on $E$.}
	\end{equation} 
	For any $\varepsilon>0$ we have
	\begin{equation}
	|u_{\tau_1}-u_{\tau_2}|<\varepsilon\ \ \mbox{on $E$ for sufficiently small $\tau_1,\tau_2$.}
	\end{equation}We can derive from \eqref{ot3t}  and Lemma \ref{plap} that
	\begin{eqnarray}
	\lefteqn{	\int_{E}\left(|\nabla u_{\tau_1}|^{p-2}\nabla u_{\tau_1}-|\nabla u_{\tau_2}|^{p-2}\nabla u_{\tau_2}\right)\cdot\nabla(u_{\tau_1}-u_{\tau_2})dx}\nonumber\\
	&\leq&	\io\left(|\nabla u_{\tau_1}|^{p-2}\nabla u_{\tau_1}-|\nabla u_{\tau_2}|^{p-2}\nabla u_{\tau_2}\right)\cdot\nabla\gamma_\varepsilon(u_{\tau_1}-u_{\tau_2})dx\nonumber\\
	&=&\io\left(\ln\rho_{\tau_1}-\tau_1|u_{\tau_1}|^{p-2}u_{\tau_1}-\ln\rho_{\tau_2}+\tau_2|u_{\tau_2}|^{p-2}u_{\tau_2}\right)\gamma_\varepsilon(u_{\tau_1}-u_{\tau_2})dx\nonumber\\
	&\leq & c\varepsilon,
	\end{eqnarray}
	where $\gamma_\varepsilon$ is obtained by replacing L with $\varepsilon$ in \eqref{posi1}. Apply (ii) in Lemma \ref{plap} and \eqref{utb1} to deduce
	\begin{equation}
	\int_E|\nabla(u_{\tau_1}-u_{\tau_2})|^2dx\leq c\varepsilon.
	\end{equation}
	Thus $\{\nabla\ut\}$ is precompact in $\left(L^2(E)\right)^N$. Let $q<p$  be given.
	We estimate
	\begin{eqnarray}
	\io|\nabla(\ut-u)|^q dx&=&\int_E|\nabla(\ut-u)|^q dx+\int_{\Omega\setminus E}|\nabla(\ut-u)|^q dx\nonumber\\
	&\leq &c|\Omega\setminus E|^{1-\frac{q}{p}}+\int_E|\nabla(\ut-u)|^q dx\nonumber\\
	&\leq &c\delta^{1-\frac{q}{p}}+\int_E|\nabla(\ut-u)|^q dx.
	\end{eqnarray}
	Therefore,
	\begin{equation}
	\limsup_{\tau\rightarrow 0}\leq c\delta^{1-\frac{q}{p}}\ \mbox{at least along a subsequence.}
	\end{equation}
	Since $\delta$ is arbitrary, this implies the desired result.
\end{proof}
In view of Lemma \ref{linterp}, \eqref{rb1}, the classical Calder\'{o}n and Zygmund estimate, $\{\rt\}$
is bounded in $W^{2,q}(\Omega)$, where $q=\min\{2,\frac{Np}{N-p}\}$.
Passing to subsequences if necessary, we may assume
\begin{eqnarray}
\ut&\rightharpoonup & \mbox{$u$ weakly in $W^{1,p}(\Omega)$},\label{otn15}\\
\rt&\rightharpoonup  & \mbox{$\rho$ weakly in $W^{2,q}(\Omega)$ and a.e. on $\Omega$}.\label{otn13}
\end{eqnarray}
By virtue of Claim \ref{gup},
\begin{equation}
|\nabla\ut|^{p-2}\nabla\ut\rightharpoonup |\nabla u|^{p-2}\nabla u\ \ \mbox{weakly in $\left(L^{\frac{p}{p-1}}(\Omega)\right)^N$.}
\end{equation}
With the aid of Fatou's Lemma, we deduce from Claim \ref{lnr} that
$$\int_{\Omega}|\ln \rho| \, dx
\leq\liminf_{\tau\rightarrow 0}\int_{\Omega}|\ln \rt| \, dx\leq c.$$
Therefore, the set
$$A_0=\{(x)\in\Omega:\rho(x)=0\}$$
has Lebesque measure $0$. 
This combined with (\ref{otn13}) asserts that
\begin{equation}
\ln \rt\rightarrow \ln\rho \ \ \ \mbox{a.e. on $\Omega$}.
\end{equation}
Obviously, we have from \eqref{ue} that
\begin{equation}
\tau|\ut|^{p-2}\ut\rightarrow \mbox{$0$ strongly in $L^{\frac{p}{p-1}}(\Omega)$, and thus a.e on $\Omega$ }
\end{equation}
(passing to a subsequence if need be). Recall (\ref{ns11}) to obtain
\begin{equation}
-\plapt\rightarrow \ln\rho \ \ \ \mbox{a.e. on $\Omega$}.
\end{equation}
On the other hand, we conclude from Claim \ref{lnr} and \eqref{ue} that the sequence
$\{-\plapt\}$ is bounded in both $L^1(\Omega)$ and $\left(W^{1,p}(\Omega)\right)^*$. Hence we have
\begin{equation}
-\plapt\rightharpoonup -\plap\equiv\mu \ \ \ \mbox{weakly in both $\mathcal{M} (\overline{\Omega})$ and $\left(W^{1,p}(\Omega)\right)^*$}.\label{ow1}
\end{equation}
The key issue is: do we have
$$-\plap=\mu=\ln\rho?$$
The following claim addresses this issue.
\begin{clm}The restriction of $\mu$ to the set $\overline{\Omega}\setminus A_0$ is a function. This function is exactly $\ln\rho$. That is, the Lebesgue decomposition of
	$\mu$ with respect to the Lebesgue measure is $\ln\rho+\nu_s$, where $\nu_s$ is a measure supported in $A_0$, and we have
	\begin{equation}
	\rho=e^\mu \ \ \mbox{on the set $\overline{\Omega}\setminus A_0$.}
	\end{equation}
	That is, $\ln\rho$ is the function $g_a$ in the definition of a weak solution.
\end{clm}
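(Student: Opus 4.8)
\emph{Strategy.} Recall that $\psi_\tau\equiv-\plapt$ converges to $\ln\rho$ a.e.\ on $\Omega$ and weakly-$*$ in $\mathcal M(\overline\Omega)$ to $\mu$ (see \eqref{ow1}). The plan is to split $\psi_\tau=\psi_\tau^+-\psi_\tau^-$: the positive part will be shown to converge \emph{strongly} in $L^1(\Omega)$, hence contributes nothing singular, while \emph{all} the concentration is carried by the negative part and must be supported on the Lebesgue-null set $A_0=\{\rho=0\}$.

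\emph{Positive part.} Since $\ln^+ s\le s$ for $s>0$, we have $\psi_\tau^+=(\ln\rt)^+\le\rt$ a.e.\ on $\Omega$. By Claim \ref{rtc} the sequence $\{\rt\}$ is bounded (indeed precompact) in $W^{1,2}(\Omega)$, hence precompact in $L^1(\Omega)$ and therefore uniformly integrable; so is $\{\psi_\tau^+\}$. Combining uniform integrability with $\psi_\tau^+\to(\ln\rho)^+$ a.e.\ and Vitali's theorem gives $\psi_\tau^+\to(\ln\rho)^+$ strongly in $L^1(\Omega)$. Consequently $-\psi_\tau^-=\psi_\tau-\psi_\tau^+$ converges weakly-$*$ in $\mathcal M(\overline\Omega)$ to $\mu-(\ln\rho)^+dx$, and since $\psi_\tau^-\ge0$ the measure $\nu:=(\ln\rho)^+dx-\mu$ is nonnegative; in particular $\mu\le(\ln\rho)^+dx$.

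\emph{Negative part and localization onto $A_0$.} Put $\phi:=(\ln\rho)^-$, which lies in $L^1(\Omega)$ by Claim \ref{lnr}. For fixed $M>0$ the truncation $\psi_\tau^-\wedge M$ is bounded by $M$ and tends a.e.\ to $\phi\wedge M$, so by dominated convergence $\psi_\tau^-\wedge M\to\phi\wedge M$ in $L^1(\Omega)$; writing $\psi_\tau^-=(\psi_\tau^-\wedge M)+(\psi_\tau^--M)^+$ and passing to the limit yields
\[
\nu=(\phi\wedge M)\,dx+\nu^{(M)},\qquad \nu^{(M)}:=\nu-(\phi\wedge M)\,dx\ge0,
\]
with $\nu^{(M)}$ the weak-$*$ limit of $(\psi_\tau^--M)^+$. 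Because $\phi\in L^1(\Omega)$, $(\phi\wedge M)\,dx\uparrow\phi\,dx$ in total variation, so the nonnegative measures $\nu^{(M)}$ decrease to $\sigma:=\nu-\phi\,dx\ge0$ as $M\to\infty$. Now $(\psi_\tau^--M)^+$ is supported in $\{\rt\le e^{-M}\}$. Using the strong convergence $\rt\to\rho$ in $W^{1,2}(\Omega)$ together with the bound $\rt\in W^{2,q}(\Omega)$, $q=\min\{2,p^*\}$ (which, since $N\le4$, gives enough regularity of the limit to control sublevel sets; for $N\le3$, $\rho$ is continuous and $\rt\to\rho$ locally uniformly), one checks that for every open $V\Subset\{\rho>e^{-M}\}$ one has $\rt>e^{-M}$ on $V$ for all small $\tau$, so $(\psi_\tau^--M)^+\equiv0$ there; hence $\nu^{(M)}$ puts no mass on $\{\rho>e^{-M}\}$, i.e.\ $\nu^{(M)}$ is concentrated on $\{\rho\le e^{-M}\}$. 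Since $\sigma\le\nu^{(M)}$ for every $M$, the measure $\sigma$ is concentrated on $\bigcap_{M>0}\{\rho\le e^{-M}\}=\{\rho=0\}=A_0$.

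\emph{Conclusion and the main obstacle.} As $|A_0|=0$, the nonnegative measure $\sigma$ is purely singular, so $\nu_{\mathrm{ac}}=\phi\,dx$ and $\sigma$ is supported in $A_0$. Substituting back,
\[
\mu=(\ln\rho)^+dx-\nu=\bigl((\ln\rho)^+-(\ln\rho)^-\bigr)dx-\sigma=\ln\rho\,dx-\sigma,
\]
which is exactly the Lebesgue decomposition of $\mu$: its absolutely continuous part is the function $\ln\rho$ — this is the $g_a$ of the definition of weak solution — and its singular part $-\sigma$ is supported in $A_0$. Hence the restriction of $\mu$ to $\overline\Omega\setminus A_0$ coincides with $\ln\rho$, and $\rho=e^{\mu}$ there. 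The delicate point is the localization in the previous paragraph: one must rule out concentration of $\{\psi_\tau^-\}$ — equivalently, of $\{-\plapt\}$ on the region where it tends to $-\infty$ — anywhere outside $A_0$. This is precisely where the strong $W^{1,2}$-compactness of $\{\rt\}$ (Claim \ref{rtc}) and the higher regularity $\rt\in W^{2,q}$ of \eqref{otn13}, together with the standing hypothesis $N\le4$, are genuinely used; the rest (the positive part, the truncation of the negative part, and the bookkeeping of the Lebesgue decomposition) is routine.
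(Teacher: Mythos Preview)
Your decomposition $\psi_\tau=\psi_\tau^+-\psi_\tau^-$ and the treatment of the positive part via Vitali are correct and clean. The paper, by contrast, never splits by sign: it tests \eqref{ns11} against $\xi\,\theta_\varepsilon(\rho_\tau)$, where $\theta_\varepsilon$ is a smooth cutoff vanishing on $[0,\varepsilon]$, and passes to the limit using the pairing $\bigl(W^{1,p}\bigr)^*\times W^{1,p}$, since $-\Delta_p u_\tau\rightharpoonup\mu$ in $\bigl(W^{1,p}\bigr)^*$ while $\theta_\varepsilon(\rho_\tau)\to\theta_\varepsilon(\rho)$ \emph{strongly in $W^{1,p}$} thanks to Claim~\ref{rtc}. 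This yields $\int\theta_\varepsilon(\rho)\xi\,d\mu=\int\theta_\varepsilon(\rho)\ln\rho\,\xi\,dx$ directly, and one lets $\varepsilon\to0$.

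The gap in your argument is the localization step for $\nu^{(M)}$. You assert that for open $V\Subset\{\rho>e^{-M}\}$ one has $\rho_\tau>e^{-M}$ on $V$ for all small $\tau$, invoking the $W^{2,q}$ bound with $q=\min\{2,p^*\}$. For $N\le3$ this is fine: then $q>N/2$, so $W^{2,q}\hookrightarrow C(\overline\Omega)$ compactly and $\rho_\tau\to\rho$ uniformly. But for $N=4$ one has $q\le2=N/2$ for every $p\in(1,2]$, so there is no Morrey embedding and no uniform convergence; strong $W^{1,2}$ convergence alone does not give the pointwise control you need. Worse, since $\nu^{(M)}$ may charge Lebesgue-null sets, the very set $\{\rho>e^{-M}\}$ is not well defined for $\nu^{(M)}$ without choosing a quasi-continuous representative and knowing that $\nu^{(M)}$ does not charge sets of zero $p$-capacity---information that comes precisely from the $\bigl(W^{1,p}\bigr)^*$ structure you are not using. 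The paper's cutoff argument bypasses both issues simultaneously: the test function $\theta_\varepsilon(\rho_\tau)\xi$ lives in $W^{1,p}$, converges strongly there, and pairs against the weak $\bigl(W^{1,p}\bigr)^*$ limit without any appeal to continuity of $\rho$. To repair your proof in the case $N=4$, you would essentially have to insert this same device---multiplying $(\psi_\tau^--M)^+$ by $\theta_{e^{-M}}(\rho_\tau)$ and using the duality---at which point the sign decomposition becomes superfluous.
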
\begin{center}
	
\end{center}
\begin{proof} The proof is almost identical to the proof of Proposition 3.7 in \cite{LX}. For the reader's convenience, we shall reproduce it here.
	Keep in mind that since $\mu\in\left(W^{1,p}(\Omega)\right)^*$ each function in $W^{1,p}(\Omega)$ is $\mu$-measurable,  and thus it is well-defined except on a set of $\mu$ measure $0$. Furthermore,
	$ \langle \mu, v \rangle \, =\int_{\Omega} v \, d\mu$ 
	for each $v\in W^{1,p}(\Omega) $. 
	For $\varepsilon>0$ let $\te$ be a smooth function on $\mathbb{R}$ having the properties
	$$\te(s)=\left\{\begin{array}{ll}
	1 &\mbox{if $s\geq 2\varepsilon$,}\\
	0&\mbox{if $s\leq \varepsilon$ \hspace{.5in} and}
	\end{array}\right.$$
	$$
	0\leq\te\leq 1\ \ \mbox{on $\mathbb{R}$.}$$
	Then it is easy to verify from Claim \ref{rtc} that we still have
	\begin{equation}
	\te(\rt)\rightarrow\te(\rho)\ \ \mbox{strongly in $W^{1,p}(\Omega)$ for each $p\leq 2$.}\label{otn14}
	\end{equation}
	Pick a function $\xi$ from $C^\infty(\overline{\Omega})$. Multiply through (\ref{ns11}) by $\xi\, \te(\rt)$ and integrate the resulting equation over $\Omega$ to obtain
	\begin{equation}
	-\int_{\Omega}\plapt\te(\rt)\, \xi \, dx+\tau\int_{\Omega}|\ut|^{p-2} \ut\, \te(\rt)\, \xi \, dx
	=\int_{\Omega}\ln\rt \, \te(\rt)\, \xi \, dx.\label{otn20}
	\end{equation}
	For each fixed $\varepsilon$ we can infer from \eqref{rb1} that the sequence $\{\ln\rt \, \te(\rt)\}$ is bounded in $L^p(\Omega)$ for any $p>1$. This, along with (\ref{otn13}), gives
	$$\int_{\Omega}\ln\rt \, \te(\rt)\, \xi \, dx\rightarrow\int_{\Omega}\te(\rho)\ln\rho\, \xi \, dx.$$
	Observe from (\ref{otn14}) and (\ref{ow1}) that
	\begin{equation}
	-\int_{\Omega}\plapt \, \te(\rt)\, \xi\, dx
	=\langle -\plapt,\te(\rt)\, \xi \rangle\, 
	\rightarrow\int_{\Omega}\te(\rho)\, \xi \, d\mu.
	\end{equation}
	Taking $\tau\rightarrow 0$ in (\ref{otn20}) yields
	\begin{equation}
	\int_{\Omega}\te(\rho)\,\xi \,d\mu=\int_{\Omega}\te(\rho)\ln\rho \, \xi \, dx.\label{otn21}
	\end{equation}
	Obviously, $\rho\in W^{1,p}(\Omega)$, and thus it is well-defined except on a set of $\mu$ measure $0$. We can easily conclude from the definition of $\te$ that $\{\te(\rho)\}$ converges everywhere on the set where $\rho$ is defined as $\varepsilon\rightarrow 0$. With the aid of the Dominated Convergence Theorem, we can take $\varepsilon\rightarrow 0$ in (\ref{otn21}) to obtain
	$$\int_{\Omega\setminus A_0}\, \xi \, d\mu=\int_{\Omega\setminus A_0}\ln\rho \, \xi \, dx.$$
	This is true for every $\xi\in C^\infty(\overline{\Omega})$, which means
	\begin{equation}
	\mu=\ln\rho\ \ \ \mbox{on $\Omega\setminus A_0$.}
	\end{equation}
	This proves the claim.	\end{proof}

With this claim, the proof of Theorem \ref{th1.1} is now totally completed.

\section{Remarks about the time-dependent problem}

In this section, we first fabricate an approximation scheme for the time-dependent problem \eqref{p1}-\eqref{p3}. This is based upon Theorem \ref{p21}. Then
we show that estimate \eqref{nm2} is preserved for the approximate problems.

Let $T>0$ be given. For each $j\in\{1,2,\cdots,\}$ we divide the time interval $[0,T]$ into $j$ equal subintervals. Set
$$\delta=\frac{T}{j}.$$
We discretize  \eqref{p1}-\eqref{p3} as follows. For $k=1,\cdots, j$, we solve recursively the system
\begin{eqnarray}
\frac{u_k-u_{k-1}}{\delta}-\Delta\esk+\delta\sk&=&0\ \ \ \mbox{in $\Omega$},\label{s31}\\
-\plapk+\delta|\uk|^{p-2}\uk&=&\sk\ \ \ \mbox{in $\Omega$},\label{s32}\\
\nabla\esk\cdot\nu=\nabla\uk&=&0\ \ \ \mbox{on $\partial\Omega$}.\label{s33}
\end{eqnarray}
Introduce the functions
\begin{eqnarray}
\utj(x,t)&=&\frac{t-t_{k-1}}{\delta}\uk(x)+\left(1-\frac{t-t_{k-1}}{\delta}\right)\uko(x), \ x\in\Omega,  \ t\in(t_{k-1},t _k],\\
\ubj(x,t)&=&\uk(x), \ \ \ x\in\Omega, \ \ t\in(t_{k-1},t_k],\\
\sbj(x,t)&=&\skx, \ \ \ x\in\Omega, \ \ t\in(t_{k-1},t_k],
\end{eqnarray}
where $t_k=k\delta$. We can rewrite \eqref{s31}-\eqref{s33} as
\begin{eqnarray}
\frac{\partial\utj}{\partial t}-\Delta \esbj+\delta\sbj&=&0\ \ \ \mbox{in $\omt$},\label{omm1}\\
-\plapj+\delta|\ubj|^{p-2}\ubj &=&\sbj \ \ \ \mbox{in $\omt$}.
\end{eqnarray}
We proceed to derive a priori estimates for the sequence of approximate solutions $\{\utj,\ubj,\sbj\}$. 

\begin{prop}
	There holds
	\begin{eqnarray}
	\lefteqn{\frac{1}{p}\max_{0\leq t\leq T}\int_\Omega|\nabla\ubj|^p dx +4\int_{\Omega_T}|\nabla e^{\frac{1}{2}\sbj}|^2dxdt}\nonumber\\
	&&+\frac{\delta}{p}\max_{0\leq t\leq T}\int_\Omega|\ubj|^pdx
	+\delta\int_{\Omega_T}\sbj^2 dxdt\nonumber\\
	&\leq &\frac{1}{p}\int_\Omega|\nabla u_0|^p dx+
	\frac{\delta}{p}\int_\Omega|u_0|^pdx.
	\end{eqnarray}
\end{prop}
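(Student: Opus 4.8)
The plan is to carry out, at the discrete level, the formal computation behind \eqref{nm2}. There the key step was to use $-\plap=\psi$ as a test function in the height equation; here the natural analogue is to use $\sk$ itself as a test function in \eqref{s31} at each time level $k\in\{1,\dots,j\}$. The equation \eqref{s31} is exactly of the form treated in Theorem \ref{p21} (with $\tau=\delta$, $a=\delta^{-1}$, and right-hand side $\delta^{-1}\uko$), so $\sk\in W^{2,q}(\Omega)$ for every $q>1$ and $\uk\in C^{1,\lambda}(\overline{\Omega})$; in particular all the test-function manipulations and integrations by parts below are legitimate.

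Fix $k$ and multiply \eqref{s31} by $\sk$. Since $\nabla\esk=\esk\nabla\sk$ and $\nabla\esk\cdot\nu=0$ on $\po$,
\[
-\io\Delta\esk\,\sk\,dx=\io\esk|\nabla\sk|^2dx=4\io\left|\nabla e^{\frac{1}{2}\sk}\right|^2dx,
\]
so that
\[
\frac{1}{\delta}\io(\uk-\uko)\sk\,dx+4\io\left|\nabla e^{\frac{1}{2}\sk}\right|^2dx+\delta\io\sk^2\,dx=0.
\]
To handle the first term I would test the (weak form of the) second equation \eqref{s32} with $\uk-\uko\in W^{1,p}(\Omega)$, obtaining
\[
\io(\uk-\uko)\sk\,dx=\io|\nabla\uk|^{p-2}\nabla\uk\cdot\nabla(\uk-\uko)\,dx+\delta\io|\uk|^{p-2}\uk\,(\uk-\uko)\,dx,
\]
and then apply inequality (4) of Lemma \ref{elmen} --- with $x=\nabla\uk$, $y=\nabla\uko$ in the first integral and its scalar version in the second --- to get the discrete chain-rule bound
\[
\io(\uk-\uko)\sk\,dx\geq\frac{1}{p}\io\left(|\nabla\uk|^p-|\nabla\uko|^p\right)dx+\frac{\delta}{p}\io\left(|\uk|^p-|\uko|^p\right)dx.
\]

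Combining these displays and multiplying through by $\delta$ gives, for every $k$,
\[
\frac{1}{p}\io\left(|\nabla\uk|^p-|\nabla\uko|^p\right)dx+\frac{\delta}{p}\io\left(|\uk|^p-|\uko|^p\right)dx+4\delta\io\left|\nabla e^{\frac{1}{2}\sk}\right|^2dx+\delta^2\io\sk^2\,dx\leq 0.
\]
Summing over $k=1,\dots,m$ for any $m\le j$, the first two groups telescope; since $\ubj=\uk$ and $\sbj=\sk$ on $(t_{k-1},t_k]$, the remaining two sums equal $4\int_{\Omega\times(0,t_m)}|\nabla e^{\frac{1}{2}\sbj}|^2\,dx\,dt+\delta\int_{\Omega\times(0,t_m)}\sbj^2\,dx\,dt$. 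Hence
\[
\frac{1}{p}\io|\nabla u_m|^p\,dx+\frac{\delta}{p}\io|u_m|^p\,dx+4\int_{\Omega\times(0,t_m)}\left|\nabla e^{\frac{1}{2}\sbj}\right|^2dx\,dt+\delta\int_{\Omega\times(0,t_m)}\sbj^2\,dx\,dt\leq\frac{1}{p}\io|\nabla u_0|^p\,dx+\frac{\delta}{p}\io|u_0|^p\,dx
\]
for every $m$; letting $m$ run over all time levels in the first two terms and taking $m=j$ (so $\Omega\times(0,t_m)=\omt$) in the two space-time integrals yields the asserted estimate.

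The only genuinely delicate point is the backward-difference term $\frac{1}{\delta}\io(\uk-\uko)\sk\,dx$: in the continuous identity of Lemma \ref{lapri} one uses the exact chain rule $\partial_t\big(\frac{1}{p}\io|\nabla u|^p\,dx\big)=\io|\nabla u|^{p-2}\nabla u\cdot\nabla\partial_t u\,dx$, which at the discrete level must be replaced by the one-sided inequality (4) of Lemma \ref{elmen}. This is precisely what turns the computation into a telescoping sum, and also why, after summation, only the bound with $\max_{0\le t\le T}$ survives rather than an equality. All remaining steps are routine once the regularity provided by Theorem \ref{p21} is invoked to make the manipulations rigorous.
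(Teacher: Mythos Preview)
Your proof is correct and follows essentially the same approach as the paper: multiply \eqref{s31} by $\sk$, compute $\io\nabla\esk\cdot\nabla\sk\,dx=4\io|\nabla e^{\frac{1}{2}\sk}|^2dx$, test \eqref{s32} with $\uk-\uko$ and invoke inequality (4) of Lemma \ref{elmen} to control the backward difference, then multiply by $\delta$ and sum telescopically over $k$. Your version is in fact slightly more explicit than the paper's about the final step (summing to an arbitrary $m$ to recover the $\max$ over time levels), but the argument is the same.
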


Obviously, this proposition is the discretized version of \eqref{nm2}.

\begin{proof} 
	Multiply through \eqref{s31} by $\psi_k$ and integrate to obtain
	\begin{equation}
	\int_\Omega\frac{ u_k- u_{k-1}}{\delta}\psi_k \, dx
	+\int_\Omega\nabla\left(\esk\right)\cdot\nabla\sk \, dx
	+\delta\int_\Omega\sk^2 \, dx=0.\label{ota10}
	\end{equation}
	The second integral in the preceding equation is computed as follows:
	\begin{eqnarray}
	\int_\Omega\nabla\left(\esk\right)\nabla\sk \, dx &=&
	\int_\Omega\esk \, |\nabla\sk|^2 \, dx\nonumber\\
	&=&4\int_\Omega|\nabla e^{\frac{1}{2}\psi_k} |^2dx.	\label{ota8}		
	\end{eqnarray}
	Using $\uk-\uko$ as a test function in \eqref{s32} yields
	\begin{eqnarray}
	\int_\Omega(u_k- u_{k-1})\psi_k \, dx&=&\io|\nabla\uk|^{p-2}\nabla\uk(\nabla\uk-\nabla\uko)dx\nonumber\\
	&&+\delta\io |\uk|^{p-2}\uk(\uk-\uko)dx\nonumber\\
	&\geq&\frac{1}{p}\io\left(|\nabla\uk|^p-|\nabla\uko|^p\right)dx+\frac{\delta}{p}\io\left(|\uk|^p-|\uko|^p\right)dx.\label{ota9}
	\end{eqnarray}
	The last step is due to (3) in Lemma \ref{elmen}.
	Substituting \eqref{ota8} and \eqref{ota9} into \eqref{ota10} yield
	\begin{eqnarray}
	\lefteqn{\frac{1}{p\delta}\int_\Omega\left(|\nabla\uk|^p-|\nabla\uko|^p\right)dx +4\int_\Omega|\nabla e^{\frac{1}{2}\sk}|^2dx}\nonumber\\
	&&+\frac{1}{p}\int_\Omega\left(|\uk|^p-|\uko|^p\right)dx
	+\delta\int_\Omega\sk^2 dx\leq 0.
	\end{eqnarray}
	Then the proposition follows from multiplying through the above inequality by $\delta$ and summing up the resulting one over $k$.
\end{proof}
Obviously, 	this theorem is not enough to justify passing to the limit in \eqref{omm1}. It remains open to find
additional estimates to accomplish the feat.

\bigskip
\noindent{\bf Acknowledgment.} The author is grateful to Prof. Jian-Guo Liu for originally
bringing this problem to his attention.

\end{document}